\documentclass[a4paper]{amsart}

\pdfoutput=1

\usepackage[utf8]{inputenc}
\usepackage[T1]{fontenc}
\usepackage{lmodern}
\usepackage{amsthm, amssymb, amsmath, amsfonts, mathrsfs}

\usepackage{microtype}

\usepackage[pagebackref,colorlinks=true,pdfpagemode=none,urlcolor=blue,
linkcolor=blue,citecolor=blue]{hyperref}

\usepackage{amsmath,amsfonts,amssymb,amsthm}
\usepackage{amsthm, amssymb, amsmath, amsfonts, mathrsfs}

\usepackage{mathrsfs}
\usepackage{MnSymbol}
\usepackage{scalerel} 

\usepackage{color}
\usepackage{accents}

\usepackage{bbm}

%
%
%
%

%


\setlength{\parskip}{4px}
\usepackage{dsfont}

\newtheorem{theorem}{Theorem}[section]
\newtheorem{lemma}[theorem]{Lemma}

\newtheorem{proposition}[theorem]{Proposition}

\numberwithin{equation}{section}
\numberwithin{figure}{section}
\theoremstyle{plain}

  \theoremstyle{definition}
  
  \theoremstyle{plain}
  
  \theoremstyle{plain}

\theoremstyle{remark}
\newtheorem{remark}[theorem]{Remark}

\renewenvironment{proof}[1][Proof]{ {\itshape \noindent {#1.}} }{$\Box$
\medskip}

\numberwithin{equation}{section}
\newcommand{\R}{\mathbb{R}}

\newcommand{\Z}{\mathbb{Z}}

\newcommand{\E}{\mathbb{E}}

\newcommand{\U}{\mathcal{U}}

\newcommand{\eps}{\varepsilon}
\def\les{\lesssim}

\newcommand{\Var}{\mathrm{Var}}

\newcommand{\la}{\langle}
\newcommand{\ra}{\rangle}

\newcommand{\cov}{\mathrm{Cov}}

\newcommand{\dd}{\mathrm{d}}
\newcommand{\sigL}{\sigma_{\mathrm{Lip}}}

\newcommand{\blue}{\textcolor{blue}}

\begin{document}

\title[Fluctuations of nonlinear SHE in $d\geq 3$]{Fluctuations of a nonlinear stochastic heat equation in dimensions three and higher}
\author{Yu Gu, Jiawei Li}

\address[Yu Gu]{Department of Mathematics, Carnegie Mellon University, Pittsburgh, PA 15213, USA}

\address[Jiawei Li]{Department of Mathematics, Carnegie Mellon University, Pittsburgh, PA 15213, USA}

\maketitle

\begin{abstract}
We study the solution to a nonlinear stochastic heat equation in $d\geq 3$. The equation is driven by a Gaussian multiplicative noise that is white in time and smooth in space. For a small coupling constant, we prove (i) the solution converges to the stationary distribution in large time; (ii) the diffusive scale fluctuations are described by the Edwards-Wilkinson equation.

\bigskip


\medskip

\noindent \textsc{Keywords:} Stochastic heat equation, Malliavin calculus, stationary solution.
\end{abstract}
\maketitle

\section{Introduction}
\subsection{Main result}
We study the solution to the nonlinear stochastic heat equation 
\begin{equation}
\partial_t u=\Delta u+\beta \sigma(u)\dot{W}_\phi(t,x), \quad\quad t>0, x\in\R^d, d\geq 3,
    \label{eqn}
\end{equation}
with constant initial data $u(0,x)\equiv 1$, where $\beta>0$ is a constant. We assume that $\sigma(\cdot)$ is a global Lipschitz function satisfying $|\sigma(x)-\sigma(y)|\leq \sigL |x-y|$ for all $x,y\in\R^d$. Here $\sigL$ is a fixed positive constant. Moreover, $\dot{W}_\phi$ is a centered Gaussian noise that is white in time and smooth in space, constructed from a spacetime white noise $\dot{W}$ and a non-negative mollifier $\phi\in C_c^\infty(\R^d)$:
\[
\dot{W}_\phi(t,x)=\int_{\R^d}\phi(x-y)\dot{W}(t,y)\dd y.
\]
The covariance function is given by 
\[
\begin{aligned}
&\E[\dot{W}_\phi(t,x)\dot{W}_\phi(s,y)]=\delta_0(t-s)R(x-y), \\
&R(x)=\int_{\R^d}\phi(x+y)\phi(y)\dd y \in C_c^\infty(\R^d).
\end{aligned}
\]
 Under our assumptions, there exists a unique continuous random field as the mild solution  to \eqref{eqn}, given by
\begin{equation}
    u(t,x)=1+\beta\int^t_0\int_{\R^d}p(t-s,x-y)\sigma(u(s,y)) \dot{W}_\phi(s,y)\dd y \dd s,
    \label{e.mild}
\end{equation} 
where $p(t,x)=(4\pi t)^{-d/2}e^{-\frac{\lvert x\rvert^2}{4t}}$ is the heat kernel, and the stochastic integral in \eqref{e.mild} is interpreted in the It\^o-Walsh sense. We rescale the solution diffusively, and define 
\[u_\varepsilon(t,x)=u\left(\frac{t}{\varepsilon^2},\frac{x}{\varepsilon}\right).
\]
The first result is about the behavior of $u$ as $t\to\infty$.
\begin{theorem}
There exists $\beta_0=\beta_0(d,\phi,\sigma)>0$ such that if $\beta<\beta_0$, then $u(t,\cdot)\Rightarrow Z(\cdot)$ in $C(\R^d)$, as $t\to\infty$, where $Z(\cdot)$ is a stationary random field.
\label{t.thm1}
\end{theorem}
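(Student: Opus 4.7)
My plan is to realize the weak limit $Z$ as the $t=0$ slice of a family of reverse-time solutions, prove the family is Cauchy in $L^2$ at each spatial point via a small-$\beta$ Volterra estimate specific to $d \geq 3$, and upgrade pointwise $L^2$ convergence to weak convergence in $C(\R^d)$ through a uniform-in-$T$ Kolmogorov bound on spatial increments. Concretely, I would extend $\dot W_\phi$ to a stationary spacetime noise on $\R \times \R^d$; for each $T > 0$ let $v_T$ be the solution of \eqref{eqn} on $[-T, \infty) \times \R^d$ with $v_T(-T, \cdot) \equiv 1$. By time-stationarity of the noise, $v_T(0, \cdot) \stackrel{d}{=} u(T, \cdot)$, so it suffices to prove $v_T(0, \cdot)$ converges weakly in $C(\R^d)$ to a stationary random field.

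For $T_1 < T_2$ set $w(t,x) := v_{T_2}(t,x) - v_{T_1}(t,x)$ for $t \geq -T_1$. Subtracting the two shifted mild formulas and squaring, the It\^o--Walsh isometry, the Lipschitz bound on $\sigma$, spatial stationarity of the laws, and non-negativity of $R = \phi * \phi(-\cdot)$ yield a scalar Volterra inequality for $g(t) := \E[w(t,0)^2]$:
\[
g(t) \leq A(t) + \lambda \int_{-T_1}^t K(t-s)\, g(s)\,\dd s, \qquad K(\tau) := \int_{\R^d} e^{-2\tau|\xi|^2}\,|\what\phi(\xi)|^2\,\dd\xi,
\]
with $\lambda = C\sigL^2\beta^2$ and $A(t) \leq C\beta^2 \int_{t+T_1}^{t+T_2} K(\tau)\,\dd\tau$. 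The decisive input from $d \geq 3$ is $K(\tau) \asymp \tau^{-d/2}$ as $\tau \to \infty$, hence $\|K\|_1 < \infty$. A parallel Picard argument controls $\sup_{T,s,y} \E[v_T(s,y)^2]$ uniformly, which justifies the moment bound on $\E[\sigma(v_T)^2]$ used above. Choosing $\beta_0$ so that $\mu := \lambda \|K\|_1 < 1$, the Volterra resolvent series converges, and a change of variables collapsing the iterated kernels gives
\[
g(0) \leq C\beta^2 \sum_{k \geq 0} \lambda^k \int_{T_1}^\infty K^{*(k+1)}(\sigma)\,\dd\sigma.
\]
Each summand tends to $0$ as $T_1 \to \infty$, and is dominated by $\lambda^k \|K\|_1^{k+1}$, summable because $\mu<1$; dominated convergence in $k$ then yields $g(0) \to 0$. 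Applied to linear combinations at finitely many points, the same argument gives $L^2$-Cauchy\-ness of all finite-dimensional distributions of $v_T(0,\cdot)$.

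To upgrade to weak convergence in $C(\R^d)$, I would prove uniform-in-$T$ Kolmogorov--Chentsov bounds
\[
\E\bigl[|v_T(0,x) - v_T(0,x')|^p\bigr] \leq C_p\, |x-x'|^{p\alpha}
\]
for some $\alpha \in (0,1)$ and all sufficiently large even $p$, using Burkholder's inequality in place of the It\^o isometry together with the spatial smoothness of $p(t,\cdot)$ and $\phi$. This tightens $\{v_T(0,\cdot)\}_T$ in $C(\R^d)$; combined with the $L^2$-convergence of f.d.d., one obtains the claimed weak limit $Z$, which is stationary thanks to the spatial translation invariance of each $v_T$.

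The main obstacle will be the Volterra step: the two-parameter source $A(t)$ must be collapsed into a single tail $\int_{T_1}^\infty K^{*(k+1)}(\sigma)\,\dd\sigma$, and the resulting series in $k$ controlled via the contraction condition $\mu<1$. This is exactly where the smallness of $\beta$ enters essentially (as a genuine contraction rather than a perturbative local-in-time estimate), and exactly where $d \geq 3$ is needed to guarantee $K \in L^1$; the Kolmogorov step is technical but routine given Lipschitz $\sigma$ and smooth $\phi$.
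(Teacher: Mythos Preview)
Your proposal is correct and follows essentially the same route as the paper: shift the equation back to start at time $-T$, prove the time-$0$ slices are Cauchy in $L^2(\Omega)$ via a small-$\beta$ Volterra iteration with the integrable kernel $K(\tau)\asymp 1\wedge\tau^{-d/2}$ (this is exactly where $d\geq3$ and the smallness of $\beta$ enter), and then upgrade to weak convergence in $C(\R^d)$ via uniform-in-$T$ Kolmogorov bounds on spatial increments. The only cosmetic difference is in the iteration step: the paper bounds the $n$-th iterate pointwise by $(C\beta^2)^{n+1}[1\wedge(t+K_2)^{-(d/2-1)}]$ to extract an explicit rate, whereas you invoke dominated convergence on the resolvent series $\sum_k\lambda^k\int_{T_1}^\infty K^{*(k+1)}(\sigma)\,\dd\sigma$; both arguments are valid and equivalent in spirit.
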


On top of this result, we obtain the Edwards-Wilkinson limit as follows:

\begin{theorem}
Under the same assumption of Theorem~\ref{t.thm1}, for any test function $g\in C_c^\infty(\R^d)$ and $t>0$, we have
\[\frac{1}{\varepsilon^{\frac{d}{2}-1}}\int_{\R^d}\left(u_\varepsilon(t,x)-1\right)g(x)\dd x\Rightarrow\int_{\R^d}\mathcal{U}(t,x)g(x)\dd x
\]
in distribution as $\eps\to0$, where $\mathcal{U}$ solves the Edwards-Wilkinson equation 
\[
\partial_t \U=\Delta\mathcal{U}+\beta \nu_\sigma \dot{W}(t,x),\quad\quad \U(0,x)\equiv 0,
\]
and $\nu_\sigma$ is the effective constant depending on $\sigma$, the spatial covariance function $R$, and the stationary random field $Z$ obtained in Theorem~\ref{t.thm1}:
\begin{equation}
\nu_\sigma^2=\int_{\R^d}\E[\sigma(Z(0))\sigma(Z(x))]R(x)\dd x.
\label{v_sigma}
\end{equation}
\label{t.thm2}
\end{theorem}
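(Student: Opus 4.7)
I would write the rescaled fluctuation as a stochastic integral using the mild form \eqref{e.mild}, identify its limiting variance using the stationarity result of Theorem~\ref{t.thm1}, and then upgrade second-moment convergence to convergence in distribution via a Malliavin--Stein argument.

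Define $F_\eps(g) := \eps^{-(d/2-1)} \int_{\R^d} g(x) (u_\eps(t,x) - 1) \dd x$. Substituting \eqref{e.mild} and interchanging integrals,
\[
F_\eps(g) = \beta \int_0^{t/\eps^2} \!\!\int_{\R^d} \sigma(u(r,z)) H_\eps(r,z) \dot{W}_\phi(r,z) \dd z \dd r,
\]
where $H_\eps(r,z) := \eps^{d/2+1} \int g(\eps y) p(t/\eps^2 - r, y - z) \dd y$. The change of variables $r = s/\eps^2$, $z = x/\eps$, combined with the identity $p(\tau/\eps^2, w/\eps) = \eps^d p(\tau, w)$, turns $H_\eps$ into $\eps^{d/2+1} P_{t-s} g(x)$, and the It\^o--Walsh isometry gives
\[
\E[F_\eps(g)^2] = \beta^2 \int_0^t \!\!\int_{\R^{2d}} C_\eps(s,x,x') \, \eps^{-d} R\!\left(\tfrac{x-x'}{\eps}\right) P_{t-s}g(x) P_{t-s}g(x') \dd x \dd x' \dd s,
\]
with $C_\eps(s, x, x') := \E[\sigma(u(s/\eps^2, x/\eps))\sigma(u(s/\eps^2, x'/\eps))]$. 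Substituting $w = (x-x')/\eps$ converts $\eps^{-d} R((x-x')/\eps) \dd x'$ into $R(w) \dd w$; Theorem~\ref{t.thm1} together with uniform $L^p$ bounds on $u$ (available for small $\beta$) yields $C_\eps(s, x, x - \eps w) \to \E[\sigma(Z(0))\sigma(Z(w))]$, so $\E[F_\eps(g)^2] \to \beta^2 \nu_\sigma^2 \int_0^t \|P_{t-s} g\|_{L^2(\R^d)}^2 \dd s$ --- precisely the Edwards--Wilkinson variance of $\int g(x) \, \U(t,x) \dd x$.

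To upgrade to a distributional limit I would invoke the Nourdin--Peccati Malliavin--Stein inequality. Global Lipschitz continuity of $\sigma$ places $u(t,x)$ in the Malliavin Sobolev space, with $D_{r,z} u$ solving a linear SPDE driven by $\dot{W}_\phi$. Writing $F_\eps(g) = \delta(v_\eps)$ as a Skorohod integral, the total variation distance $d_{TV}(F_\eps(g), \cN(0, \E[F_\eps(g)^2]))$ is controlled by the second moment of $\la D F_\eps, v_\eps \ra - \E[F_\eps(g)^2]$, which expands into integrals involving $D^2 u$. A scaling analysis using the compact support of $R$ and the $L^2$-integrability of the heat kernel in dimensions $d \geq 3$ shows this quantity is $o(1)$. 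Joint convergence for finitely many test functions and times follows by the same argument, identifying the limit as the centered Gaussian Edwards--Wilkinson field.

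The main obstacle will be on the Malliavin side: one needs uniform-in-$\eps$ $L^p$ bounds on $Du$ and $D^2 u$ over the long horizon $[0, t/\eps^2]$, with quantitative decay strong enough to pair cleanly against the compactly supported $R$. The small-$\beta$ assumption from Theorem~\ref{t.thm1} is essential here, as it provides the contraction required to iterate Picard-type moment estimates both for $u$ and for its Malliavin derivatives.
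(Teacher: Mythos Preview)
Your proposal follows the same strategy as the paper: write $F_\eps(g)=\delta(v_\eps)$ from the mild formulation, identify the limiting variance via Theorem~\ref{t.thm1} and uniform moment bounds, and then use a Malliavin--Stein bound (the paper's Proposition~\ref{p.distribution}) to reduce to showing $\E\big[|\Sigma_g-\langle DF_\eps,v_\eps\rangle_{\mathcal H}|^2\big]\to0$. Your variance computation matches exactly what the paper obtains for $\E[A_{1,\eps}]$.

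The one place where your plan deviates is the claim that the Stein remainder ``expands into integrals involving $D^2u$'' and that you would need uniform $L^p$ bounds on $D^2u$. The paper avoids second Malliavin derivatives entirely, and this is not merely cosmetic: obtaining such bounds uniformly over $[0,t/\eps^2]$ would be an extra layer of work. The trick is to split $\langle DF_\eps,v_\eps\rangle_{\mathcal H}=\beta^2\eps^{2-d}(A_{1,\eps}+A_{2,\eps})$, where $A_{1,\eps}$ is the ``diagonal'' piece coming from the explicit term $v_\eps(s,y)$ in $D_{s,y}F_\eps$ and involves only $\sigma(u(s,y_1))\sigma(u(s,y_2))$, while $A_{2,\eps}$ is the remaining stochastic integral involving $D_{s,y}u(r,z)$. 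The paper shows $\eps^{2-d}A_{2,\eps}\to0$ in $L^2$ \emph{directly}, using only the pointwise estimate $\|D_{r,z}u(t,x)\|_p\lesssim p(t-r,x-z)$ (Lemma~\ref{l.bdde}); taking a further Malliavin derivative there is what would have produced $D^2u$, and it is unnecessary. For $\mathrm{Var}[A_{1,\eps}]$, the paper applies the Clark--Ocone formula to $\sigma(u(s,\cdot))\sigma(u(s,\cdot))$, which again requires only first derivatives. So your instinct about needing uniform-in-time $L^p$ control of Malliavin derivatives is correct, but the right object is $\|D_{r,z}u(t,x)\|_p$ with its heat-kernel upper bound; that single estimate is what makes all the spacetime integrals close.
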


\subsection{Context}
The linear version of \eqref{eqn} was studied in \cite{cosco,GRZ17,nikos,mukherjee2016weak}: for small $\beta$ and the equation
\begin{equation}\label{e.linearshe}
\partial_t u=\Delta u+\beta u\dot{W}_\phi(t,x),
\end{equation}
results similar to Theorem~\ref{t.thm1} and \ref{t.thm2} were proved: (i) the pointwise distribution of $u(t,x)$ converges as $t\to\infty$; (ii) as a random Schwartz distribution, $\eps^{1-\frac{d}{2}}[u_\eps(t,\cdot)-1]$ converges to the Gaussian field given by the solution to the Edwards-Wilkinson equation. Through a Hopf-Cole transformation $h=\log u$, a KPZ-type of equation 
\begin{equation}\label{e.kpz}
\partial_t h=\Delta h+|\nabla h|^2+\beta \dot{W}_\phi(t,x)
\end{equation}
was also studied, and the same Edwards-Wilkinson limit was established in \cite{cosco,dunlap2018kpz,nikos,magnen2017diffusive}, see also \cite{comets}. Similar results were proved in \cite{caravenna2015universality,CRS18,chatterjee2018constructing,kpz2} when $d=2$, where the coupling constant $\beta$ is tuned logarithmically in $\eps$. The previous studies of the nonlinear equation \eqref{e.kpz}   
all rely on the Hopf-Cole transformation and the fact that the solution to the linear equation \eqref{e.linearshe} can be written explicitly by the Feynman-Kac formula or the Wiener chaos expansion. In light of the Hairer-Quastel universality result in the subcritical setting \cite{hairer2018class}, 
it is very natural to ask that, in the present critical setting, if we can study a more general   Hamilton-Jacobi equation 
\begin{equation}\label{e.hjb}
\partial_t h=\Delta h+H(\nabla h)+\beta \dot{W}_\phi(t,x),
\end{equation}
where the Hamiltonian $H$ is not necessarily quadratic, and prove a similar result of convergence to the Edwards-Wilkinson equation, for small $\beta$. The only result in this direction that we are aware of is a two-dimensional anisotropic KPZ equation studied in \cite{cannizzaro20192d}, where the authors considered the nonlinearity $H(\nabla h)=(\partial_{x_1}h)^2-(\partial_{x_2} h)^2$ and proved the existence of subsequential limits of the solutions started from an invariant measure.

In this short note, we study \eqref{eqn}, which to some extent sits between the linear equation \eqref{e.linearshe} and the nonlinear equation \eqref{e.hjb}. The nonlinear term $\sigma(u)$ excludes the use of the Feynman-Kac formula or the Wiener chaos expansion as in the case of \eqref{e.linearshe}, so the previous approaches do not apply. Meanwhile,  \eqref{eqn} is less nonlinear compared to \eqref{e.hjb}, and we are able to make a substantial use of the mild formulation \eqref{e.mild}.

Part of our approach is inspired by another line of work, where similar results were proved for the spatial averages of $u(t,\cdot)$ 
\cite{huang2018central,Huang2019,nualart2019averaging}. For a large class of equations and noises, which in particular covers \eqref{eqn}, central limit theorems were proved for the random variables 
\[
\eps^{-\frac{d}{2}}\int_{\R^d} [u(t,\tfrac{x}{\eps})-1]g(x)\dd x.
\]
 Studying the scaling $(t,x)\mapsto (\tfrac{t}{\eps^2},\tfrac{x}{\eps})$ as in our case requires a good understanding of the local statistics of $u(t,x)$ as $t\to\infty$, and this is provided by Theorem~\ref{t.thm1} by proving the convergence to a stationary distribution. The local statistical property of $Z(\cdot)$ appears naturally in the expression of the effective variance \eqref{v_sigma}, see the heuristic argument at the beginning of Section~\ref{s.proof2}. 
 
 For the linear stochastic heat equation \eqref{e.linearshe} in $d\geq3$ with $\beta\ll1$, the convergence to the stationary solution was shown in \cite{dunlap2018random,kifer1997burgers}, based on the Feynman-Kac formula. For semilinear equations, the existence of stationary solutions/invariant measures was proved e.g. in the early work \cite{da1992invariant,tessitore1998invariant}, but the convergence to the invariant measure as stated in Theorem~\ref{t.thm1} seems to be unknown. Although our main focus of the paper is on the constant initial data, a similar proof works for more general cases. In Remark~\ref{r.624} below, we explain how to adapt the proof of Theorem~\ref{t.thm1} to cover the example of  ``small'' perturbations of the constant initial data.
 
 It is worth mentioning that the assumption of small $\beta$ is necessary for the result to hold. We know from \cite{mukherjee2016weak} that the pointwise distribution of $u(t,x)$ converges to zero as $t\to\infty$, if $\beta$ is beyond a critical value. The recent works \cite{cosco,nikos} extend the result in Theorem~\ref{t.thm2} to the whole regime of $\beta$ in which $\nu_{\mathrm{\sigma}}<\infty$, in the linear case of $\sigma(x)\equiv x$.

\subsection*{Organization of the paper}
In Section~\ref{s.pre}, we introduce the basic tools of analysis on Gaussian space and prove some estimates on the solution $u$ as well as its Malliavin derivative that are used in the sequel. The proofs of Theorems~\ref{t.thm1} and \ref{t.thm2} are in Sections~\ref{s.proof1} and \ref{s.proof2} respectively.

\subsection*{Notations} We use the following notations and conventions throughout this paper.

(i) We use $a\les b$ to denote $a\leq Cb$ for some constant $C$ that is independent of $\eps$. For instance, as $\sigma$ is global Lipschitz, we have $|\sigma(x)|\les 1+|x|$.


(iii) $\|\cdot\|_p$ denotes the $L^p(\Omega)$ norm of the probability space $(\Omega,\mathscr{G},\mathbb{P})$ where the spacetime white noise $\dot{W}$ is built on.

(iv) $p(t,x)=(4\pi t)^{-d/2}e^{-\frac{\lvert x\rvert^2}{4t}}$ is the heat kernel of $\partial_t-\Delta$.

(v) The Fourier transform of $f$ is denoted by $\hat{f}(\xi)=\int_{\R^d} f(x)e^{-i\xi\cdot x} \dd x$.

\subsection*{Acknowledgement}
We thank the two anonymous referees for  many helpful suggestions to improve the presentation.
The work was partially supported by the NSF through DMS-1907928 and the Center for
Nonlinear Analysis of CMU.

\section{Preliminaries}
\label{s.pre}
Throughout this paper, we consider the centered Gaussian noise $\dot{W}_\phi(t,x)$ on $\R\times \R^d$ with $d\geq 3$, whose covariance is given by
\[\E[\dot{W}_\phi(t,x)\dot{W}_\phi(s,y)]=\delta_0(t-s)R(x-y),\]
where the spatial covariance function $R$ is assumed to be smooth and has a compact support. One may associate an isonormal Gaussian process to this noise. Consider a stochastic process \[
\left\{W_\phi(h)=\int_{\R^{1+d}} h(s,x)\dot{W}_\phi(s,x)\dd x\dd s,\quad h\in C_c^\infty(\R\times \R^d)\right\}
\] defined on a complete probability space $(\Omega,\mathscr{G},\mathbb{P})$ satisfying
\begin{align*}
    \E[W_\phi(h)W_\phi(g)]=\int^\infty_{-\infty}\int_{\R^{2d}}h(s,x)g(s,y)R(x-y)\dd x\dd y\dd s.
\end{align*}
As $R$ is positive definite, the above integral defines an inner product, which we denote by $\langle \cdot,\cdot \rangle_{\mathcal{H}}$, so that $\E[W_\phi(h)W_\phi(g)]=\langle h,g\rangle_{\mathcal{H}}$ for all $h,g\in C_c^\infty(\R\times \R^d)$. Complete the space $C_c^\infty(\R\times \R^d)$ with respect to this inner product, and denote the completion by $\mathcal{H}$, and thus we obtain an isonormal Gaussian process $\{W_\phi(h),h\in\mathcal{H}\}$. Consider the $\sigma$-algebra defined by
\[
\mathscr{F}_t^0:=\sigma\{W_\phi(\mathds{1}_{[0,s]}(\cdot)\mathds{1}_A(\cdot)):0\leq s\leq t, A\in \mathscr{B}_b(\R^d)\},
\]
where $\mathscr{B}_b(\R^d)$ denotes the bounded Borel subsets of $\R^d$ and let $\mathscr{F}_t$ denote the completion of $\mathscr{F}_t^0$ with respect to the measure $\mathbb{P}$. Denote $\mathscr{F}=\{\mathscr{F}_t:t\geq 0\}$, which is the natural filtration generated by $\dot{W}_\phi$, and then for all $\mathscr{F}$-adapted, jointly measurable random field $\{\Phi(t,x):(t,x)\in \R\times \R^d\}$ such that
\[\E[\lVert \Phi\rVert_\mathcal{H}^2]=\E\left[\int^\infty_{-\infty}\int_{\R^{2d}}\Phi(t,x)\Phi(t,y)R(x-y)\dd x\dd y\dd t\right]<\infty,\]
the stochastic integral 
\[\int^\infty_{-\infty}\int_{\R^d} \Phi(t,x)\dd W_\phi(t,x)\]
is well-defined in the It{\^o}-Walsh sense, and the It{\^o} isometry holds:
\begin{equation}
    \E\left[\left\vert \int^\infty_{-\infty}\int_{\R^d} \Phi(t,x)\dd W_\phi(t,x)\right\vert^2\right]=\E[\lVert \Phi\rVert_\mathcal{H}^2].
    \label{walsh_isometry}
\end{equation}
Throughout the paper, we will not distinguish the following two expressions: 
\[
\int \Phi(t,x)\dot{W}_\phi(t,x)\dd x\dd t\quad \mbox{ and } \quad \int \Phi(t,x)\dd W_\phi(t,x).
\]

In the proof of Theorem~\ref{t.thm2}, we also need to adopt methods from Malliavin calculus, so let us introduce a differential structure on the infinite-dimensional space in the manner of Malliavin. We shall follow the notations from \cite{nualart2006malliavin}. Let $\mathcal{S}$ be the space of random variables of the form $F=f(W_\phi(h_1),\cdots,W_\phi(h_n))$, where $f\in C^\infty(\R^n)$ with all  derivatives having at most polynomial growth. Its Malliavin derivative is an $\mathcal{H}$-valued random variable given by 
\[DF=\sum_{i=1}^n\partial_i f(W_\phi(h_1),\cdots,W_\phi(h_n))h_i,\]
where $\partial_i f$ denotes the partial derivative of $f$ with respect to the $i$-th variable. By induction, one may define the higher-order derivative $D^l F$, $l=1,2,\cdots$, which is an $\mathcal{H}^{\otimes l}$-valued random variable. Then the Sobolev norm $\lVert \cdot \rVert_{r,p}$ of such an $F$ is defined as
\[
\lVert F\rVert_{r,p}=\left(\E[\lvert F\rvert^p]+\sum_{l=1}^r\E[\lVert D^lF\rVert_{\mathcal{H}^{\otimes l}}^p]\right)^{\frac{1}{p}}.
\]
Complete $\mathcal{S}$ with respect to this Sobolev norm and denote the completion by $\mathbb{D}^{r,p}$.

Let $\delta$ be the divergence operator, which is the adjoint operator of the differential operator $D$. For each $v\in \mathrm{Dom}\delta$, define $\delta(v)$ to be the unique element in $L^2(\Omega)$ such that 
\begin{equation}\label{e.defdivergence}
\E[F\delta(v)]=\E[\langle DF,v\rangle_\mathcal{H}],\quad \forall F\in \mathbb{D}^{1,2}.
\end{equation}

For $v\in\mathrm{Dom}\delta$, $\delta(v)$ is also called the Skorokhod integral of $v$. In our case of $v$ being adapted to the filtration $\mathcal{F}_t$, it coincides with the Walsh integral, which is written as $\delta(v)=\int v(t,x)\dd W_\phi(t,x)$. The Malliavin derivative $D_{t,x}\delta(v)=D\delta(v)(t,x)$ is given by (see Proposition \blue{1.3.8}, Chapter 1, \cite{nualart2006malliavin})
\[D_{t,x}\delta(v)=v(t,x)+\int^\infty_{-\infty}\int_{\R^d}D_{t,x}v(s,y)\dd W_\phi(s,y).\]

Using the mild formulation \eqref{e.mild} of the nonlinear stochastic heat equation, we may write
\begin{align*}
    u(t,x)=1+\delta(v_{t,x}),\vphantom{\int^1_0}
\end{align*}
where 
\[v_{t,x}(s,y)=\beta \mathds{1}_{[0,t]}(s)p(t-s,x-y)\sigma(u(s,y)),\]
so the Malliavin derivative of the solution $u$ is given by 
\[
\begin{aligned}
D_{r,z}u(t,x)=&\beta\mathds{1}_{[0,t]}(r)p(t-r,x-z)\sigma(u(r,z))\\
&+\beta\int^t_r\int_{\R^d}p(t-s,x-y)\Sigma(s,y)D_{r,z}u(s,y)\dd W_\phi(s,y).
\end{aligned}
\]
where $\Sigma(s,y)$ is an adapted process, bounded by the Lipschitz constant $\sigL$. If we further assume that $\sigma(\cdot)$ is continuously differentiable, then $\Sigma(s,y)=\sigma'(u(s,y))$.

We first prove the following moment estimates on $u$. This result is very important as it will be applied in the proofs of both Theorem \ref{t.thm1} and Theorem \ref{t.thm2}.
\begin{lemma}\label{l.mmbd}
For any $p>1$, there exists $\beta_0=\beta_0(d,p,\phi,\sigL)$ and $C=C(\beta_0,d,p,\phi,\sigL)$ such that if $\beta<\beta_0$, we have
\[
\sup_{t\geq 0}\|u(t,0)\|_p\leq C.
\]
\end{lemma}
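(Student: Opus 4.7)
The plan is to reduce matters to a closed Volterra-type inequality for $f(t):=\|u(t,0)\|_p$, using the BDG inequality for Walsh integrals together with spatial translation invariance, and then to exploit the fact that the relevant heat-kernel--covariance kernel is integrable in $d\geq 3$.

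Without loss of generality I take $p\geq 2$ (the case $p\in(1,2)$ follows from Jensen). Starting from the mild formulation \eqref{e.mild} and applying BDG, Minkowski in $L^{p/2}$ (legitimate since both $p$ and $R$ are nonnegative), and Cauchy--Schwarz,
\begin{equation*}
\|u(t,0)-1\|_p^2 \les \beta^2 \int_0^t \int_{\R^{2d}} p(t-s,y)\,p(t-s,y')\,R(y-y')\,\|\sigma(u(s,y))\|_p\,\|\sigma(u(s,y'))\|_p\,\dd y\,\dd y'\,\dd s.
\end{equation*}
Since $\sigma$ is Lipschitz, $\|\sigma(u(s,y))\|_p\les 1+\|u(s,y)\|_p$. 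Since the initial data is constant and the noise is spatially translation invariant, the law of $u(s,y)$ is independent of $y$, so $\|u(s,y)\|_p=f(s)$. Performing the $y,y'$ integrals,
\begin{equation*}
f(t)^2 \les 1 + \beta^2 \int_0^t K(t-s)(1+f(s))^2\,\dd s, \qquad K(r):=\int_{\R^d} p(2r,z)\,R(z)\,\dd z.
\end{equation*}

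The dimension-$d\geq 3$ assumption enters through the kernel $K$: since $R$ is bounded with compact support, $K(r)\les 1\wedge r^{-d/2}$, so $I:=\int_0^\infty K(r)\,\dd r<\infty$. A standard Picard iteration on the mild formulation shows that $F(T):=\sup_{t\in[0,T]} f(t)$ is finite for every finite $T$. Monotonicity of $F$ together with the previous display gives
\begin{equation*}
F(T)^2 \les 1 + \beta^2 (1+F(T))^2\,I.
\end{equation*}
Expanding the square and choosing $\beta_0$ small enough that the implicit constant times $\beta_0^2\,\sigL^2\,I$ is strictly less than $\tfrac12$ allows one to absorb the $F(T)^2$ term on the right into the left, leaving $F(T)\leq C$ with $C$ independent of $T$. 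Sending $T\to\infty$ yields $\sup_{t\geq 0}\|u(t,0)\|_p\leq C$.

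The main obstacle is conceptual rather than technical: one must organize the estimate so that it collapses to a single scalar inequality in $f(t)$. Spatial translation invariance, which kills the $y$-dependence of $\|u(s,y)\|_p$, is what makes this possible; without it one would need to propagate spatial information through the BDG estimate and the Volterra inequality would not close as cleanly. The integrability of $K$, available only for $d\geq 3$, is precisely what allows small $\beta$ to beat the potential growth coming from the nonlinear term $\sigma(u)$.
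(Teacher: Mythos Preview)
Your proof is correct and follows essentially the same approach as the paper: BDG plus Minkowski to reduce to a scalar Volterra-type inequality for $\|u(t,0)\|_p$, spatial stationarity to kill the $y$-dependence, and integrability of the kernel $K(r)\les 1\wedge r^{-d/2}$ in $d\geq 3$ to close the estimate for small $\beta$. The only cosmetic differences are that the paper works with $p=2n$ and iterates the Volterra inequality directly, whereas you work with general $p\geq 2$ and use a supremum-and-absorb argument (which requires the a priori finiteness of $F(T)$ that you correctly note); both closures are standard and equivalent here.
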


\begin{proof}
By the mild formulation, 
\[
u(t,0)=1+\beta\int_0^t \int_{\R^d}p(t-s,y)\sigma(u(s,y))\dd W_\phi(s,y).
\]
For any $n\in \Z_{\geq0}$, by the Burkholder-Davis-Gundy inequality and the Minkowski inequality, we have
\[
\begin{aligned}
\|u(t,0)\|_{2n}^{2n}\les& 1+\beta^{2n}\left\Vert\int_0^t\int_{\R^{2d}} p(t-s,y_1)p(t-s,y_2)\sigma(u(s,y_1))\sigma(u(s,y_2)) R(y_1-y_2)\dd y_1\dd y_2\dd s\right\Vert_n^n\\
\leq&1+\beta^{2n}\left(\int_0^t \int_{\R^{2d}} p(t-s,y_1)p(t-s,y_2)\|\sigma(u(s,y_1))\sigma(u(s,y_2))\|_nR(y_1-y_2)\dd y_1\dd y_2 \dd s\right)^n.
\end{aligned}
\]
Applying H\"older's inequality and using the stationarity of $u(s,\cdot)$, we further obtain
\[
\|\sigma(u(s,y_1))\sigma(u(s,y_2))\|_n\leq \|\sigma(u(s,0))\|_{2n}^2\les 1+\|u(s,0)\|_{2n}^2.
\]
Thus, if we define $f(t)=\|u(t,0)\|_{2n}^2$, then 
\[
f(t)\les 1+\beta^2\int_0^t\int_{\R^{2d}} p(t-s,y_1)p(t-s,y_2)  (1+f(s)) R(y_1-y_2)\dd y_1\dd y_2 \dd s.
\]
For the integration in $y_1,y_2$, we use the elementary inequality
\[
\int_{\R^{2d}} p(t-s,y_1)p(t-s,y_2) R(y_1-y_2)\dd y_1\dd y_2 \les 1\wedge (t-s)^{-d/2},
\]
which yields the integral inequality for $f$: there exists $C>0$ independent of $t$ such that 
\begin{equation}\label{e.291}
f(t)\leq C+C\beta^2\int_0^t [1\wedge (t-s)^{-d/2}] f(s)\dd s.
\end{equation}
As the kernel $1\wedge s^{-d/2}$ is in $L^1(\R_+)$ in $d\geq 3$, we choose $\beta$ small so that 
\[
C\beta^2 \int_0^\infty [1\wedge s^{-d/2}] \dd s<1
\]
and a direct iteration of \eqref{e.291} shows that $\sup_{t\geq0} f(t) \les 1$, which completes the proof.
\end{proof}

Next, we establish an upper bound of $\lVert D_{r,z}u(t,x)\rVert_p$ which will be useful in the proof of Theorem \ref{t.thm2}. 

\begin{lemma}
For all $t>0$, $x\in\R^d$ and $p\geq 2$, there exists some constant $C=C(\beta,d,p,\phi,\sigL)$ such that 
\[\lVert D_{r,z}u(t,x)\rVert_p\leq Cp(t-r,x-z),\quad \mbox{ for all } (r,z)\in(0,t)\times\R^d. \]
\label{l.bdde}
\end{lemma}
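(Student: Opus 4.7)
The plan is to iterate the linear stochastic integral equation for $D_{r,z}u(t,x)$ and expand it as a Picard-type series $D_{r,z}u(t,x) = \sum_{k\geq 0} \psi_k(t,x)$, where
\[
\psi_0(t,x) := \beta\, p(t-r,x-z)\sigma(u(r,z)), \qquad \psi_{k+1}(t,x) := \beta \int_r^t \int_{\R^d} p(t-s,x-y) \Sigma(s,y) \psi_k(s,y) \dd W_\phi(s,y).
\]
I will prove by induction that $\|\psi_k(t,x)\|_p \leq A_k\, p(t-r,x-z)$ for a summable sequence $(A_k)$. The base case $A_0 = C\beta$ follows from Lemma~\ref{l.mmbd} combined with the linear growth $|\sigma(y)|\leq \sigL|y|+|\sigma(0)|$, which yields $\sup_{r,z}\|\sigma(u(r,z))\|_p\leq C$.

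For the inductive step, I apply the Burkholder-Davis-Gundy inequality to the Walsh integral, then Minkowski in $L^{p/2}$ together with Hölder and the pointwise bound $|\Sigma|\leq\sigL$:
\[
\|\psi_{k+1}(t,x)\|_p^2 \leq C_p\sigL^2\beta^2 \int_r^t \int_{\R^{2d}} p(t-s,x-y_1)p(t-s,x-y_2)\|\psi_k(s,y_1)\|_p \|\psi_k(s,y_2)\|_p R(y_1-y_2)\dd y_1\dd y_2\dd s.
\]
Substituting the induction hypothesis reduces the step to the key spatial estimate
\[
\int_r^t \int_{\R^{2d}} p(t-s,x-y_1) p(t-s,x-y_2) p(s-r,y_1-z) p(s-r,y_2-z) R(y_1-y_2)\dd y_1\dd y_2\dd s \leq K\, p(t-r,x-z)^2
\]
for a constant $K=K(d,R)$ independent of $t-r$; this is the main obstacle.

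To establish the key estimate I will use the Brownian-bridge factorization $p(t-s,x-y)p(s-r,y-z) = p(t-r,x-z)\, q_s(y)$, where $q_s$ is the Gaussian density in $y$ with variance $\sigma_s^2 := (t-s)(s-r)/(t-r)$. The left-hand side then equals $p(t-r,x-z)^2 \int_r^t \int_{\R^d} R(w)\, g_{2\sigma_s^2}(w)\dd w\dd s$, where $g_{2\sigma_s^2}$ is a centered Gaussian density with variance $2\sigma_s^2$ (the law of the difference of two i.i.d.\ $q_s$-distributed variables). The inner integral is bounded by $\min(\|R\|_\infty,\, \|R\|_{L^1}(8\pi\sigma_s^2)^{-d/2}) \leq C_R(1\wedge \sigma_s^{-d})$, and since $\sigma_s^2 \geq \tfrac12\min(s-r,\,t-s)$ on $(r,t)$, the time integral is bounded by $2\int_0^\infty(1\wedge (u/2)^{-d/2})\dd u$, which is finite precisely because $d\geq 3$.

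Combining yields $A_{k+1} \leq (C_p^{1/2}\sigL\sqrt{K})\,\beta\, A_k$. For $\beta$ sufficiently small (as already assumed), the geometric series $\sum_k A_k$ converges; by uniqueness of the linear integral equation the series $\sum_k \psi_k$ is identified with $D_{r,z}u$, giving $\|D_{r,z}u(t,x)\|_p \leq \bigl(\sum_k A_k\bigr) p(t-r,x-z)$. The dimensional assumption $d\geq 3$ enters in an essential way only through the time-integrability of $1\wedge \sigma_s^{-d}$ in the key estimate.
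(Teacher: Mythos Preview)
Your proposal is correct and follows essentially the same strategy as the paper: both use BDG plus a Picard/Gronwall iteration, and both ultimately rest on the same Brownian-bridge identity $p(t-s,x-y)p(s-r,y-z)=p(t-r,x-z)\,q_s(y)$ together with the integrability of $1\wedge\sigma_s^{-d}$ in $d\ge 3$.

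The organization differs in two minor respects. First, the paper factors $D_{r,z}u(t,x)=S_{r,z}(t,x)\sigma(u(r,z))$ and bounds $\|S_{r,z}\|_{2p}$, recovering the Malliavin-derivative bound at the end via H\"older with the moment bound on $\sigma(u(r,z))$; you instead keep the factor $\sigma(u(r,z))$ inside $\psi_0$ and absorb it into $A_0$. Second, the paper outsources the iteration of the norm inequality to \cite[Lemma~2.7]{chen2019regularity}, obtaining the series $H(\theta,\lambda)=\sum_n\lambda^n h_n(\theta)$ with $h_n$ bounded by $C_R^n$, whereas you iterate the random field $\psi_k$ directly and spell out the Brownian-bridge computation explicitly. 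Your route is more self-contained; the paper's is shorter by citation. The one point you should make rigorous in a full write-up is the identification $D_{r,z}u=\sum_k\psi_k$: write $D_{r,z}u-\sum_{k\le N}\psi_k=\mathcal{L}^{N+1}[D_{r,z}u]$ and show this remainder tends to zero in $L^p$ using a crude a~priori bound on $\|D_{r,z}u(s,y)\|_p$ valid on $[r,t]$ (or invoke uniqueness for the linear equation in a suitable class).
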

\begin{proof}
The proof follows from \cite[Lemma 3.11]{chen2019regularity}, with slight modifications. 
Let $S_{r,z}(t,x)$ be the solution to 
\[S_{r,z}(t,x)=\beta p(t-r,x-z)+\beta \int^t_r\int_{\R^d}p(t-s,x-y)\Sigma(s,y)S_{r,z}(s,y)\dd W_\phi(s,y),\]
then due to the uniqueness of the solution to the above equation, 
\[D_{r,z}u(t,x)= S_{r,z}(t,x)\sigma(u(r,z)).\]
By the Burkholder-Davis-Gundy inequality, 
\begin{align*}
    \lVert S_{r,z}(t,x)\rVert_p^2\les& \beta^2p(t-r,x-z)^2+\beta^2\int^t_r\int_{\R^{2d}}p(t-s,x-y_1)p(t-s,x-y_2)\\
   &\cdot \lVert S_{r,z}(s,y_1)\rVert_p\lVert S_{r,z}(s,y_2)\rVert_p R(y_1-y_2)\dd y_1\dd y_2 \dd s.
\end{align*}
Notice that if we set $t=\theta+r$ and $x=\eta+z$, then the above estimate for $S_{r,z}(\theta+r,\eta+z)$ is independent of $(r,z)$, so for convenience we may denote \[g(\theta,\eta)=\lVert S_{r,z}(\theta+r,\eta+z)\rVert_p,\]
and thus
\begin{align*}
    g(\theta,\eta)^2\leq & C\beta^2p(\theta,\eta)^2+C\beta^2\int^\theta_0\int_{\R^{2d}}p(\theta-s,\eta-y_1)p(\theta-s,\eta-y_2)\\
    &\cdot g(s,y_1)g(s,y_2)R(y_1-y_2)\dd y_1\dd y_2 \dd s.
\end{align*}
Then according to Lemma 2.7 in \cite{chen2019regularity}, 
we may conclude that 
\[g(\theta,\eta)\leq \sqrt{C\beta^2}p(\theta,\eta)H(\theta,2C\beta^2)^{\frac{1}{2}},\]
and $H$ is defined as
\[H(t,\lambda)=\sum_{n=0}^\infty \lambda^n h_n(t),\]
where $h_0(t)=1$ and
\[h_n(t)=\int^t_0h_{n-1}(s)\int_{\R^d}p(t-s,z)R(z)\dd z\dd s,\quad n\geq 1.\]
We notice that for all $t$,
\[\lvert h_1(t)\rvert \leq  \int_0^\infty\int_{\R^d} p(s,z)R(z)\dd z\dd s=:C_R,
\]
and thus for all $n\geq1$ and $t\geq0$,
\[\lvert h_n(t)\rvert\leq C_R^n.
\]
Therefore, for sufficiently small $\beta$ 
we have $H(\theta,2C\beta^2)\les1$ uniformly in $\theta$, and hence, it follows that for all $p>1$, 
\[\lVert D_{r,z}u(t,x)\rVert_p\leq \lVert S_{r,z}(t,x)\rVert _{2p}\lVert \sigma(u(r,z))\rVert_{2p}\leq Cp(t-r,x-z),\]
where in the last step we used the fact that $|\sigma(x)|\les 1+|x|$ and Lemma~\ref{l.mmbd}.
\end{proof}

%

In the proof of Theorem~\ref{t.thm2}, we need the following result which is also used in  \cite{huang2018central}. With the help of the this result, to establish the required convergence in law, we only need to control the $L^2$-distance on the right-hand side of the result below, which is more amenable to calculations. 
\begin{proposition}
Let $X$ be a random variable such that $X=\delta(v)$ for $v\in\mathrm{Dom}\,\delta $. Assume $X\in\mathbb{D}^{1,2}$. Let $Z$ be a centered Gaussian random variable with variance $\Sigma$. For any $C^2$-function $h:\R\to\R$ with bounded second order derivative, then
\[\lvert \E h(X)-\E h(Z)\rvert\leq \frac{1}{2}\lVert h''\rVert_\infty\sqrt{\E\left[\left(\Sigma-\langle DX, v\rangle_{\mathcal{H}}\right)^2\right]}.\]
\label{p.distribution}
\end{proposition}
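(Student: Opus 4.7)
The plan is to combine Stein's method with the Malliavin integration-by-parts formula \eqref{e.defdivergence}. First, I would associate to the target law $\mathcal{N}(0,\Sigma)$ its Stein equation: let $f\in C^1(\R)$ be the distinguished solution to
\[
\Sigma f'(x) - x f(x) = h(x) - \E[h(Z)].
\]
Evaluating at the random variable $X$ and taking expectations converts the quantity of interest into the Stein discrepancy
\[
\E[h(X)] - \E[h(Z)] = \Sigma\, \E[f'(X)] - \E[X f(X)].
\]

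Next I would exploit the hypothesis $X = \delta(v)$. The chain rule $Df(X) = f'(X)\,DX$ (valid since $f$ is smooth and $X \in \mathbb{D}^{1,2}$) combined with the duality \eqref{e.defdivergence} gives
\[
\E[X f(X)] = \E[\delta(v)\, f(X)] = \E[\langle Df(X), v\rangle_{\mathcal{H}}] = \E\bigl[f'(X)\, \langle DX, v\rangle_{\mathcal{H}}\bigr].
\]
Substituting back produces the identity
\[
\E[h(X)] - \E[h(Z)] = \E\bigl[f'(X)\,(\Sigma - \langle DX, v\rangle_{\mathcal{H}})\bigr],
\]
and the trivial estimate $|\E[f'(X)A]| \leq \|f'\|_\infty \E|A|$ together with Jensen's inequality $\E|A| \leq (\E A^2)^{1/2}$ yields
\[
|\E[h(X)] - \E[h(Z)]| \leq \|f'\|_\infty\, \sqrt{\E\bigl[(\Sigma - \langle DX, v\rangle_{\mathcal{H}})^2\bigr]}.
\]

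What remains, and what I view as the one genuinely delicate point, is the sup-norm bound $\|f'\|_\infty \leq \tfrac{1}{2}\|h''\|_\infty$, which accounts for the constant $1/2$ in the statement. To obtain it, I would use the Ornstein-Uhlenbeck semigroup representation: if $(P_t)_{t \geq 0}$ is the semigroup on $\R$ with generator $\Sigma \partial_x^2 - x\partial_x$ (whose unique invariant measure is $\mathcal{N}(0,\Sigma)$), one checks directly from the PDE that
\[
f(x) = -\int_0^\infty e^{-t}\,(P_t h')(x)\, dt
\]
solves Stein's equation. Differentiating under the integral sign and using the intertwining $\partial_x P_t = e^{-t} P_t \partial_x$ yields
\[
f'(x) = -\int_0^\infty e^{-2t}\,(P_t h'')(x)\, dt,
\]
from which $|f'(x)| \leq \|h''\|_\infty \int_0^\infty e^{-2t}\,dt = \tfrac{1}{2}\|h''\|_\infty$ follows immediately. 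Plugging this into the Cauchy-Schwarz bound above completes the argument.
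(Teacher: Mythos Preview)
The paper does not actually prove this proposition; it is stated as a known tool and attributed to \cite{huang2018central}, with no argument given. Your Stein--Malliavin derivation is correct and is precisely the standard proof of this estimate: the duality step $\E[\delta(v)f(X)]=\E[\langle Df(X),v\rangle_{\mathcal H}]$ is exactly \eqref{e.defdivergence}, the chain rule is legitimate since $f$ is $C^1$ with bounded derivative and $X\in\mathbb D^{1,2}$, and the Ornstein--Uhlenbeck representation together with the intertwining $\partial_x P_t=e^{-t}P_t\partial_x$ cleanly delivers $\|f'\|_\infty\le\tfrac12\|h''\|_\infty$.
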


We also need to apply the following version of Clark-Ocone formula in the proof of Theorem \ref{t.thm2}, to estimate certain covariance.
\begin{proposition}[Clark-Ocone Formula]
Let $X\in \mathbb D^{1,2}$, then 
\[
X=\mathbb{E}[X]+\int_{\R_+\times \mathbb{R}^d}\mathbb{E}[D_{r,z}X|\mathscr{F}_r]dW_\phi (r,z).\]
\label{p.clark-ocone}
\end{proposition}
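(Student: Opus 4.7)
The plan is to establish the representation via the Wiener chaos decomposition, which is the standard route for such identities in Malliavin calculus. By the density of the smooth cylindrical variables $\mathcal{S}$ in $\mathbb{D}^{1,2}$, and the continuity of both sides of the claimed identity in the $\|\cdot\|_{1,2}$ norm (the right-hand side being controlled by the It\^o--Walsh isometry combined with the $L^2$-contractivity of conditional expectation), it suffices to prove the identity on a dense class; the elements of the finite chaoses provide such a class.

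Decompose $X = \sum_{n \geq 0} I_n(f_n)$ with symmetric kernels $f_n \in \mathcal{H}^{\otimes n}$, so that $\mathbb{E}[X] = I_0(f_0)$. The standard Malliavin-derivative formula gives $D_{r,z} I_n(f_n) = n\, I_{n-1}(f_n(\cdot; r, z))$ for $n \geq 1$, and because $\{\mathscr{F}_r\}$ is generated by $W_\phi$ on $[0,r]$, conditioning truncates any multiple integral to time arguments in $[0,r]$:
\[
\mathbb{E}\bigl[I_{n-1}(f_n(\cdot; r, z)) \,\big|\, \mathscr{F}_r\bigr] = I_{n-1}\bigl(f_n(\cdot; r, z)\, \1_{[0,r]^{n-1}}\bigr).
\]
Taking the Walsh integral in $(r,z)$ then reassembles the chaos: each term becomes an iterated integral on the ordered simplex $\{t_1 < \cdots < t_{n-1} < r\}$, which coincides with the symmetric multiple integral $I_n(f_n)$ by the standard identification of iterated It\^o--Walsh integrals with symmetric multiple chaos integrals (the factor $n!$ from symmetrization cancels the combinatorial $n$ in front). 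Summing over $n \geq 1$ recovers $X - \mathbb{E}[X]$, as required.

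The main obstacle is justifying the interchange of the infinite chaos sum with both the conditional expectation and the Walsh integral. This rests on the hypothesis $X \in \mathbb{D}^{1,2}$, which translates to $\mathbb{E}[\|DX\|_\mathcal{H}^2] = \sum_{n \geq 1} n \cdot n! \,\|f_n\|_{\mathcal{H}^{\otimes n}}^2 < \infty$. This summability, together with the orthogonality of distinct Wiener chaoses, gives $L^2(\Omega)$ convergence of the partial-sum representation of $\int \mathbb{E}[D_{r,z}X | \mathscr{F}_r]\, dW_\phi(r,z)$ and thus permits the exchanges. A secondary technical check is that the chaos framework and the adapted-conditional-expectation formula above extend from scalar white noise to the isonormal Gaussian process built on $\mathcal{H}$, which is routine since only the Hilbert-space structure of $\mathcal{H}$ enters the argument.
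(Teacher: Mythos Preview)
Your argument is the standard chaos-expansion proof of the Clark--Ocone representation, and it is correct as sketched: the identity holds term-by-term on each chaos, the summability from the $\mathbb{D}^{1,2}$ hypothesis justifies passing to the limit, and density takes care of the rest. One minor redundancy: you invoke density of $\mathcal{S}$ at the outset and then also carry out the full chaos-sum argument with limit justification, so the density reduction is not really needed once you handle the infinite sum directly.

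The paper itself does not supply a proof of this proposition; it simply refers the reader to \cite[Proposition~6.3]{chenle}. So there is no in-paper argument to compare against. Your chaos-based route is exactly the classical one (essentially the approach in Nualart's monograph and in the cited reference), so there is nothing further to contrast.
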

The proof of this formula can be found e.g. in \cite[Proposition 6.3]{chenle}.

\section{Proof of Theorem~\ref{t.thm1}}
\label{s.proof1}
To prove the convergence of $u(t,\cdot)$ to the stationary distribution, we use a rather standard approach: instead of sending $t\to\infty$ and showing $u(t,\cdot)$ converges \emph{weakly}, we initiate the equation at $t=-K$ and consider the solution at $t=0$, then send $K\to\infty$ to prove the \emph{strong convergence}. 

More precisely, for $K>0$, we consider a family of equations indexed by $K$:
\begin{equation}
\begin{aligned}
  &\partial_t u_K=\Delta u_K+\beta \sigma(u_K)\dot{W}_\phi(t,x), \quad\quad t>-K, x\in\R^d,\\
  &u_K(-K,x)\equiv1.
    \label{shifted_eqn}
    \end{aligned}
\end{equation}
By the stationarity of the noise $\dot{W}_\phi$, we know that for all $K>0$, $u(K,\cdot)$ and $u_{K}(0,\cdot)$ as random variables taking values in $C(\R^d)$, have the same law.
Then the problem reduces to proving the weak convergence of $C(\R^d)$-valued random variables $u_K(0,\cdot)$. 

The following two propositions combine to complete the proof of Theorem~\ref{t.thm1}.
\begin{proposition}\label{p.cauchy}
For each $x\in\R^d$, $\{u_K(0,x)\}_{K\geq0}$ is a Cauchy sequence in $L^2(\Omega)$.
\end{proposition}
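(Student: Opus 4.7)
The plan is to couple $u_K$ and $u_L$ (assume $L > K > 0$) on the same probability space, both driven by the same realization of $\dot{W}_\phi$, and to show that $\|u_L(0, x) - u_K(0, x)\|_2 \to 0$ as $K \to \infty$ uniformly in $L$. Their mild formulations express the difference as a sum of two stochastic integrals,
\begin{align*}
u_L(0, x) - u_K(0, x) = \; & \beta \int_{-L}^{-K}\!\int_{\R^d} p(-s, x-y)\sigma(u_L(s, y))\,\dd W_\phi(s, y) \\
& + \beta \int_{-K}^{0}\!\int_{\R^d} p(-s, x-y)[\sigma(u_L(s, y)) - \sigma(u_K(s, y))]\,\dd W_\phi(s, y),
\end{align*}
which I would analyze directly rather than tracking an ``initial data'' term.

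I would then apply the It\^o--Walsh isometry, use the Lipschitz property of $\sigma$ on the second term, the uniform moment bound $\|\sigma(u_L(s, \cdot))\|_2 \lesssim 1$ from Lemma~\ref{l.mmbd} on the first, and the standard estimate $\int_{\R^{2d}} p(\tau, z_1)p(\tau, z_2)R(z_1-z_2)\dd z_1\dd z_2 \lesssim 1 \wedge \tau^{-d/2}$ for the spatial convolutions. This should yield the Volterra-type inequality
\[
F(t) \lesssim \int_{t+K}^{t+L}[1 \wedge v^{-d/2}]\,\dd v + C\beta^2 \int_{-K}^{t}[1 \wedge (t-s)^{-d/2}]\,F(s)\,\dd s, \qquad t \in [-K, 0],
\]
where $F(s) := \|u_L(s, 0) - u_K(s, 0)\|_2^2$, independent of the spatial point by the joint spatial stationarity of $(u_L, u_K)$. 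In $d \geq 3$ the driving term satisfies $\int_{t+K}^{t+L}[1 \wedge v^{-d/2}]\dd v \lesssim (1 + t + K)^{-(d-2)/2}$ uniformly in $L > K$. For $\beta$ small enough that $C\beta^2 \int_0^\infty [1 \wedge s^{-d/2}]\dd s < 1$ (consistent with the smallness required in Lemma~\ref{l.mmbd}), a Neumann-series bootstrap of the Volterra inequality should propagate this decay rate, giving $F(t) \leq C'(1 + t + K)^{-(d-2)/2}$ with $C'$ independent of $K$, $L$, and $t$. Evaluating at $t = 0$ then gives $F(0) \leq C'K^{-(d-2)/2} \to 0$, the desired Cauchy property.

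The main technical obstacle is the bootstrap step. The key estimate to establish is
\[
\int_0^r [1 \wedge (r-s)^{-d/2}](1+s)^{-(d-2)/2}\,\dd s \lesssim (1+r)^{-(d-2)/2},
\]
which I would prove by splitting the integration at $s = r/2$: on $[r/2, r]$ the factor $(1+s)^{-(d-2)/2}$ is already of the desired order, while on $[0, r/2]$ the singular kernel is of size $r^{-d/2}$, and in either region $d \geq 3$ yields compatibility of the exponents. The smallness of $\beta$ then closes the fixed-point argument via Neumann series.
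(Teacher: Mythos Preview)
Your proposal is correct and follows essentially the same approach as the paper: the same two-term decomposition of $u_L(0,x)-u_K(0,x)$, the same Volterra-type inequality for $F(t)=\gamma_{L,K}(t)$ with driving term $\alpha_{L,K}(t)=\int_{t+K}^{t+L}(1\wedge v^{-d/2})\,\dd v$, and the same Neumann-series iteration closed by the splitting-at-the-midpoint estimate (your final displayed inequality is exactly the paper's key bound \eqref{e.293} after the change of variables $s\mapsto s+K$, $t\mapsto r$). The only cosmetic difference is that you write $(1+s)^{-(d-2)/2}$ where the paper writes $1\wedge s^{-(d/2-1)}$, which are equivalent up to constants.
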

\begin{proposition}\label{p.tight}
The sequence of $C(\R^d)$-valued random variables $\{u(t,\cdot)\}_{t\geq0}$ is tight.
\end{proposition}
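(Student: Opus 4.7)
My plan is to prove tightness on $C(\R^d)$ (equipped with the topology of local uniform convergence) by reducing to tightness of the restrictions on each $C(K)$ for compact $K \subset \R^d$ and then applying a uniform-in-$t$ Kolmogorov--Chentsov argument. Two ingredients are needed: first, a uniform $L^p$ bound $\sup_{t \geq 0} \|u(t,x_0)\|_p < \infty$ at a single point, which is already provided by Lemma~\ref{l.mmbd}; and second, a uniform-in-$t$ H\"older moment estimate on the spatial increments,
\[
\sup_{t \geq 0} \|u(t,x) - u(t,y)\|_p \les |x-y|^\alpha, \qquad x,y \in \R^d,
\]
for some pair $(p,\alpha)$ with $\alpha p > d$. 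Combined via Kolmogorov--Chentsov with the compact embedding $C^\gamma(K) \hookrightarrow C(K)$ for $\gamma < \alpha - d/p$, these yield tightness on $C(K)$, and a diagonal argument over a compact exhaustion of $\R^d$ then gives tightness on $C(\R^d)$.

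To establish the H\"older moment estimate, I would start from the mild formulation \eqref{e.mild},
\[
u(t,x)-u(t,y) = \beta \int_0^t \int_{\R^d} \bigl[p(t-s,x-z) - p(t-s,y-z)\bigr] \sigma(u(s,z)) \dd W_\phi(s,z),
\]
then apply Burkholder--Davis--Gundy and Minkowski as in Lemma~\ref{l.mmbd}. The factor $\|\sigma(u(s,z_1))\sigma(u(s,z_2))\|_{p/2}$ is uniformly bounded by Cauchy--Schwarz, the spatial stationarity of $u(s,\cdot)$ inherited from the constant initial datum, the linear growth of $\sigma$, and Lemma~\ref{l.mmbd}. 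Setting $h := y-x$, the problem reduces to bounding
\[
I_t(h) := \int_0^t \int_{\R^{2d}} D_s(z_1) D_s(z_2) R(z_1-z_2) \dd z_1 \dd z_2 \dd s \les |h|^{2\alpha}
\]
uniformly in $t$, where $D_s(z) := p(t-s,z) - p(t-s,z+h)$.

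I would estimate $I_t(h)$ by Fourier analysis: Plancherel converts the inner double integral into $(2\pi)^{-d} \int_{\R^d} |1-e^{i\xi \cdot h}|^2 e^{-2(t-s)|\xi|^2} \hat R(\xi) \dd \xi$, the time integral gives the uniform bound $\int_0^t e^{-2(t-s)|\xi|^2} \dd s \leq 1/(2|\xi|^2)$, and the elementary interpolation $|1-e^{i\theta}|^2 \leq 4|\theta|^{2\alpha}$ (valid for $\alpha \in [0,1]$) yields
\[
I_t(h) \les |h|^{2\alpha} \int_{\R^d} |\xi|^{2\alpha-2} \hat R(\xi) \dd \xi,
\]
which is finite since $\hat R$ is Schwartz and the singularity $|\xi|^{2\alpha-2}$ is integrable near the origin in dimension $d \geq 3$ for every $\alpha \in (0,1)$. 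Choosing $p$ large and $\alpha$ close to $1$ so that $\alpha p > d$ then closes the argument.

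The main subtlety is uniformity in $t$: the time integral $\int_0^t e^{-2(t-s)|\xi|^2} \dd s$ does not decay in $t$, so closing the estimate requires exploiting the cancellation $|1-e^{i\xi \cdot h}|^2$ against the resulting $|\xi|^{-2}$ singularity at the origin. The low-frequency integrability is ensured precisely by $d \geq 3$, echoing the role of this dimension assumption throughout the paper.
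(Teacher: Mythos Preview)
Your overall strategy---single-point tightness from Lemma~\ref{l.mmbd} plus a uniform-in-$t$ Kolmogorov moment bound on spatial increments, obtained from the mild formulation via BDG and Minkowski---is exactly the paper's. There is, however, a genuine gap in the reduction to $I_t(h)$. In Lemma~\ref{l.mmbd} the kernel $p(t-s,y_1)p(t-s,y_2)R(y_1-y_2)$ is nonnegative, so Minkowski may be applied termwise. Here the kernel $D_s(z_1)D_s(z_2)R(z_1-z_2)$ changes sign, and pulling the $L^{p/2}$ norm inside the $(s,z_1,z_2)$ integral forces absolute values: what BDG\,+\,Minkowski actually delivers is
\[
\|u(t,x)-u(t,y)\|_p^2 \ \les\ \int_0^t\!\int_{\R^{2d}} |D_s(z_1)|\,|D_s(z_2)|\,R(z_1-z_2)\,\dd z_1\dd z_2\,\dd s,
\]
not $I_t(h)$. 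Your Plancherel identity $|\widehat{D_s}(\xi)|^2=|1-e^{i\xi\cdot h}|^2 e^{-2(t-s)|\xi|^2}$ is correct, but it does not apply once $D_s$ is replaced by $|D_s|$, so the cancellation you rely on to produce the factor $|1-e^{i\xi\cdot h}|^2$ is lost. (For $p=2$ one can rescue the Fourier route because the covariance $\E[\sigma(u(s,z_1))\sigma(u(s,z_2))]$ is positive definite in $z_1-z_2$, but $p=2$ is not enough for $\alpha p>d$ when $d\geq3$.)

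The paper closes this gap by inserting, \emph{before} passing to Fourier, the pointwise interpolation estimate of Chen--Huang:
\[
|p(r,z)-p(r,z+h)| \ \leq\ C\,r^{-\delta/2}\bigl[p(2r,z)+p(2r,z+h)\bigr]\,|h|^{\delta},\qquad \delta\in(0,1),
\]
which replaces $|D_s|$ by nonnegative heat kernels. The resulting integrand is nonnegative, the Fourier step is then legitimate, and one lands on the very same integral $\int_{\R^d}|\xi|^{2\delta-2}\hat R(\xi)\,\dd\xi$ that you obtained. So your endpoint is right; you just need one pointwise estimate on $|D_s|$ in place of the direct Plancherel step to get there for general $p$.
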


Since $u_K(0,x)$ is a stationary process in $x$, to show $\{u_K(0,x)\}_{K\geq0}$ is Cauchy in $L^2(\Omega)$, we only need to consider $x=0$. We write \eqref{shifted_eqn} in the mild formulation:
\begin{align}
    u_K(t,x)&=1+\beta\int^t_{-K}\int_{\R^d}p(t-s,x-y)\sigma(u_K(s,y))\dd W_\phi(s,y), \quad t>-K, x\in\R^d.
    \label{mild u_k}
\end{align}
Therefore, for any $K_1>K_2\geq 0$, and $t\geq -K_2$, we can write the difference as 
\begin{equation}\label{e.292}
u_{K_1}(t,0)-u_{K_2}(t,0)=\beta[I_{K_1,K_2}(t)+J_{K_1,K_2}(t)],
\end{equation}
with 
\[
I_{K_1,K_2}(t)=\int_{-K_1}^{-K_2}\int_{\R^d} p(t-s,y)\sigma(u_{K_1}(s,y))\dd W_\phi(s,y),
\]
\[
J_{K_1,K_2}(t)= \int_{-K_2}^t \int_{\R^d} p(t-s,y)[\sigma(u_{K_1}(s,y))-\sigma(u_{K_2}(s,y))]\dd W_\phi(s,y).
\]
For $t>-K_2>-K_1$, define 
\begin{equation}\label{e.defalpha}
\alpha_{K_1,K_2}(t)=\int_{t+K_2}^{t+K_1} (1\wedge s^{-d/2})\dd s.
\end{equation}
and
\begin{equation}\label{e.defgamma}
\gamma_{K_1,K_2}(t)=\E[|u_{K_1}(t,0)-u_{K_2}(t,0)|^2].
\end{equation}
We have the following lemmas.
\begin{lemma}\label{l.lemI}
For any $t>-K_2$, $\E[|I_{K_1,K_2}(t)|^2]\les \alpha_{K_1,K_2}(t).$
\end{lemma}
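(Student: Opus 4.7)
The plan is a direct application of the Itô–Walsh isometry followed by the elementary heat kernel bound already used in the proof of Lemma~\ref{l.mmbd}. The only care needed is to justify a uniform moment bound on $u_{K_1}$ so that the integrand is controlled by a constant.

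First, I would use the isometry \eqref{walsh_isometry} to write
\[
\E[|I_{K_1,K_2}(t)|^2]=\int_{-K_1}^{-K_2}\int_{\R^{2d}}p(t-s,y_1)p(t-s,y_2)\E[\sigma(u_{K_1}(s,y_1))\sigma(u_{K_1}(s,y_2))]R(y_1-y_2)\dd y_1\dd y_2\dd s.
\]
The Cauchy–Schwarz inequality together with the Lipschitz bound $|\sigma(x)|\lesssim 1+|x|$ gives
\[
|\E[\sigma(u_{K_1}(s,y_1))\sigma(u_{K_1}(s,y_2))]|\lesssim 1+\|u_{K_1}(s,0)\|_2^2,
\]
after noting that $u_{K_1}(s,\cdot)$ is spatially stationary (it has constant initial data). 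Moreover, by the stationarity of the noise $\dot{W}_\phi$ in time, $u_{K_1}(s,\cdot)$ has the same law as $u(s+K_1,\cdot)$, so Lemma~\ref{l.mmbd} yields $\sup_{s>-K_1}\|u_{K_1}(s,0)\|_2\lesssim 1$ uniformly in $K_1$, provided $\beta<\beta_0$.

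Plugging this uniform bound back in and using the elementary inequality
\[
\int_{\R^{2d}}p(t-s,y_1)p(t-s,y_2)R(y_1-y_2)\dd y_1\dd y_2\lesssim 1\wedge(t-s)^{-d/2}
\]
(already invoked in the proof of Lemma~\ref{l.mmbd}), I arrive at
\[
\E[|I_{K_1,K_2}(t)|^2]\lesssim \int_{-K_1}^{-K_2}[1\wedge(t-s)^{-d/2}]\dd s.
\]
The change of variable $r=t-s$ transforms this into $\int_{t+K_2}^{t+K_1}[1\wedge r^{-d/2}]\dd r=\alpha_{K_1,K_2}(t)$, finishing the proof.

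There is no serious obstacle here; the argument is essentially the same as the one that yielded the integral inequality \eqref{e.291}. The only point worth being explicit about is the transfer of the uniform moment bound of Lemma~\ref{l.mmbd} to $u_{K_1}$ via time stationarity of the driving noise, which is what lets the constant in front of $\alpha_{K_1,K_2}(t)$ be chosen independently of $K_1,K_2$ and $t$.
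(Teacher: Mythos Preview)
Your proof is correct and follows essentially the same route as the paper's: It\^o isometry, the bound $|\sigma(x)|\lesssim 1+|x|$ combined with Lemma~\ref{l.mmbd} (transferred to $u_{K_1}$ via stationarity), then the heat-kernel estimate and the change of variable $r=t-s$. The only difference is that you are slightly more explicit about the Cauchy--Schwarz step and the time-stationarity justification, which the paper leaves implicit.
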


\begin{lemma}\label{l.lemJ}
$\E[|J_{K_1,K_2}(t)|^2]\les \int_{-K_2}^t [1\wedge (t-s)^{-d/2}]\gamma_{K_1,K_2}(s)\dd s$
\end{lemma}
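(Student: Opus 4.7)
The plan is to proceed by a direct Itô-Walsh isometry computation, combined with the Lipschitz property of $\sigma$ and the spatial stationarity of each $u_{K_i}(s,\cdot)$.

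First, since $J_{K_1,K_2}(t)$ is a Walsh-type stochastic integral of a predictable integrand, the Itô isometry \eqref{walsh_isometry} gives
\begin{align*}
\E[|J_{K_1,K_2}(t)|^2] = \int_{-K_2}^t \int_{\R^{2d}} & p(t-s,y_1)p(t-s,y_2) R(y_1-y_2) \\
& \times \E\!\left[\Delta\sigma(s,y_1)\,\Delta\sigma(s,y_2)\right] dy_1\,dy_2\,ds,
\end{align*}
where $\Delta\sigma(s,y) := \sigma(u_{K_1}(s,y))-\sigma(u_{K_2}(s,y))$. By Cauchy-Schwarz in $\Omega$ and the Lipschitz assumption on $\sigma$,
\[
\bigl|\E[\Delta\sigma(s,y_1)\Delta\sigma(s,y_2)]\bigr| \leq \sigL^2 \,\|u_{K_1}(s,y_1)-u_{K_2}(s,y_1)\|_2\,\|u_{K_1}(s,y_2)-u_{K_2}(s,y_2)\|_2.
\]

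Next I would use spatial stationarity. For each fixed $K$, the random field $u_K(s,\cdot)$ is stationary in $x$ (since the initial condition is constant and the noise $\dot W_\phi$ is spatially stationary), and the difference field $u_{K_1}(s,\cdot)-u_{K_2}(s,\cdot)$ is jointly stationary in $x$ as well, so $\|u_{K_1}(s,y)-u_{K_2}(s,y)\|_2^2 = \gamma_{K_1,K_2}(s)$ for every $y$. Hence the expectation above is bounded by $\sigL^2\,\gamma_{K_1,K_2}(s)$, and we obtain
\[
\E[|J_{K_1,K_2}(t)|^2] \les \int_{-K_2}^t \gamma_{K_1,K_2}(s) \left(\int_{\R^{2d}} p(t-s,y_1)p(t-s,y_2)R(y_1-y_2)\,dy_1\,dy_2\right) ds.
\]

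Finally, I would invoke the same elementary heat-kernel/covariance estimate already used in the proof of Lemma~\ref{l.mmbd},
\[
\int_{\R^{2d}} p(t-s,y_1)p(t-s,y_2)R(y_1-y_2)\,dy_1\,dy_2 \les 1 \wedge (t-s)^{-d/2},
\]
which follows by bounding $R$ by its $L^1$-norm when $t-s\leq 1$ and, for $t-s>1$, by estimating $(p(t-s,\cdot)*p(t-s,\cdot))(y_1-y_2) = p(2(t-s),y_1-y_2) \les (t-s)^{-d/2}$ and integrating the compactly-supported $R$. Plugging this bound back yields the claim. No real obstacle arises here; the only point that requires a moment's care is the stationarity step, which is clean because we only need the marginal $L^2$-norm at a single space point rather than any joint two-point law.
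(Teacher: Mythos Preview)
Your proof is correct and follows essentially the same route as the paper: It\^o isometry, then the Lipschitz bound on $\sigma$ combined with Cauchy--Schwarz and spatial stationarity to replace the two-point expectation by $\gamma_{K_1,K_2}(s)$, and finally the elementary estimate $\int p(t-s,y_1)p(t-s,y_2)R(y_1-y_2)\,dy_1dy_2 \les 1\wedge(t-s)^{-d/2}$. The paper's version is terser but identical in substance.
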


\begin{proof}[Proof of Lemma~\ref{l.lemI}]
By It\^o's isometry and the fact that $|\sigma(x)|\les 1+|x|$, we have 
\[
\begin{aligned}
\E[|I_{K_1,K_2}(t)|^2] \les  \int_{-K_1}^{-K_2} \int_{\R^{2d}} &p(t-s,y_1)p(t-s,y_2)\\
&\cdot \E[(1+u_{K_1}(s,y_1))(1+u_{K_1}(s,y_2))]R(y_1-y_2)\dd y_1\dd y_2 \dd s.
\end{aligned}
\]
Further applying Lemma~\ref{l.mmbd} yields
\[
\begin{aligned}
\E[|I_{K_1,K_2}(t)|^2] \les & \int_{-K_1}^{-K_2} \int_{\R^{2d}} p(t-s,y_1)p(t-s,y_2) R(y_1-y_2)\dd y_1\dd y_2 \dd s\\
\les & \int_{-K_1}^{-K_2}[1\wedge (t-s)^{-d/2}]\dd s= \alpha_{K_1,K_2}(t).
\end{aligned}
\]
\end{proof}

\begin{proof}[Proof of Lemma~\ref{l.lemJ}]
By It\^o's isometry, the Lipchitz property of $\sigma$, and the stationarity of $u_K(s,\cdot)$, we have
\[
\begin{aligned}
\E[|J_{K_1,K_2}(t)|^2]\les \int_{-K_2}^t \int_{\R^d} &p(t-s,y_1)p(t-s,y_2) \\
&\cdot \E[|u_{K_1}(s,0)-u_{K_2}(s,0)|^2] R(y_1-y_2)\dd y_1 \dd y_2 \dd s.
\end{aligned}
\]
After an integration in $y_1,y_2$, the right-hand side of the above inequality can be bounded by 
\[
\int_{-K_2}^t [1\wedge (t-s)^{-d/2}] \E[|u_{K_1}(s,0)-u_{K_2}(s,0)|^2]  \dd s,
\]
which completes the proof.
\end{proof}

Combining the above two lemmas with \eqref{e.292}, we have the integral inequality
\begin{equation}\label{e.inineq}
\begin{aligned}
\gamma_{K_1,K_2}(t)=&\beta^2 \E[|I_{K_1,K_2}(t)+J_{K_1,K_2}(t)|^2]\\
\leq & C\beta^2 \alpha_{K_1,K_2}(t)+C\beta^2 \int_{-K_2}^t [1\wedge (t-s)^{-d/2}]\gamma_{K_1,K_2}(s)\dd s, \quad \mbox{ for all } t>-K_2,
\end{aligned}
\end{equation}
where $C>0$ is a constant independent of $t,K_1,K_2$.
The following lemma completes the proof of Proposition~\ref{p.cauchy}.
\begin{lemma}\label{l.624}
For fixed $t>-K_2$, $\gamma_{K_1,K_2}(t)\to0$ as $K_2\to\infty$.
\end{lemma}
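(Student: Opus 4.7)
Plan: The idea is that \eqref{e.inineq} is a Volterra integral inequality whose forcing $\alpha_{K_1,K_2}(t)$ becomes small once $t+K_2$ is large, while Lemma~\ref{l.mmbd} supplies a uniform $L^2$ bound on $\gamma_{K_1,K_2}$; I would combine these via a Gronwall/resolvent iteration to deduce decay.

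First I would pass to the time $\tau := t + K_2 \in [0,\infty)$ and set $\tilde F(\tau) := \gamma_{K_1,K_2}(\tau - K_2)$. With the shorthand $K(r) := 1\wedge r^{-d/2}$ and $\alpha^*(\tau) := \int_\tau^\infty K(r)\,\dd r$, one has $\alpha_{K_1,K_2}(\tau - K_2) \leq \alpha^*(\tau)$ uniformly in $K_1 > K_2$, so \eqref{e.inineq} rewrites as
\[
\tilde F(\tau) \leq C\beta^2 \alpha^*(\tau) + C\beta^2 \int_0^\tau K(\tau - u)\,\tilde F(u)\,\dd u.
\]
By Lemma~\ref{l.mmbd} applied to $u_{K_1}$ and $u_{K_2}$ (which have the same law as appropriate time-shifts of $u$), $\tilde F$ is bounded uniformly in $\tau,K_1,K_2$. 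Crucially, in $d\geq 3$ the kernel $K$ lies in $L^1(\R_+)$, and $\alpha^*(\tau) \to 0$ as $\tau\to\infty$.

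Next I would iterate. As in the proof of Lemma~\ref{l.mmbd}, $\beta_0$ can be taken small enough that $\lambda := C\beta^2 \|K\|_{L^1(\R_+)} < 1$. Substituting the inequality into itself $n$ times and using the uniform bound on $\tilde F$ to kill the remainder $(C\beta^2)^n (K^{*n} *_t \tilde F)(\tau) \to 0$ as $n \to \infty$ should give
\[
\tilde F(\tau) \leq C\beta^2\,\alpha^*(\tau) + \int_0^\tau R(\tau - u)\,\alpha^*(u)\,\dd u,
\]
with resolvent $R := \sum_{k \geq 1} (C\beta^2)^{k+1} K^{*k}$ satisfying $\|R\|_{L^1(\R_+)} \leq (C\beta^2)^2 \|K\|_{L^1(\R_+)} / (1-\lambda) < \infty$.

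Finally, I would send $\tau \to \infty$: the first term is $\alpha^*(\tau) \to 0$ by definition, and for the convolution I would change variables to $\int_0^\tau R(v)\,\alpha^*(\tau-v)\,\dd v$ and split at $v = \tau/2$—over $[0,\tau/2]$ bounding $\alpha^*(\tau-v) \leq \alpha^*(\tau/2)$ against $\|R\|_{L^1}$, and over $[\tau/2,\tau]$ bounding $\alpha^*(\tau-v) \leq \|K\|_{L^1}$ against the tail $\int_{\tau/2}^\infty R(v)\,\dd v$. Both pieces vanish as $\tau \to \infty$. Translating back, $\gamma_{K_1,K_2}(t) = \tilde F(t+K_2) \to 0$ as $K_2\to\infty$, uniformly in $K_1 > K_2$, which is more than enough. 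The principal (and only nontrivial) obstacle is setting up the Volterra iteration and checking $L^1$-integrability of the resolvent, which closely mirrors the contraction argument already carried out in Lemma~\ref{l.mmbd}.
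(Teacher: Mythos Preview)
Your argument is correct and follows the same Volterra-iteration skeleton as the paper, but the way you extract decay is genuinely different. The paper first proves the pointwise ``sub-eigenfunction'' estimate
\[
\int_{-K_2}^t k(t-s)\bigl[1\wedge (s+K_2)^{-(\frac{d}{2}-1)}\bigr]\,\dd s \les 1\wedge (t+K_2)^{-(\frac{d}{2}-1)},
\]
then uses $\alpha(s)\les 1\wedge(s+K_2)^{-(\frac{d}{2}-1)}$ and iterates so that each term $\gamma_n(t)$ of the Picard series is bounded by $(C\beta^2)^{n+1}\bigl[1\wedge(t+K_2)^{-(\frac{d}{2}-1)}\bigr]$; summing for $C\beta^2<1$ yields the quantitative rate $\gamma_{K_1,K_2}(t)\les (t+K_2)^{1-\frac{d}{2}}$. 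You instead pass directly to the resolvent $R=\sum_{k\ge1}(C\beta^2)^{k+1}K^{*k}\in L^1(\R_+)$ (using the uniform $L^2$ bound from Lemma~\ref{l.mmbd} to kill the remainder) and then argue that $R*\alpha^*\to0$ by a splitting of the convolution, which only uses $R\in L^1$ and $\alpha^*(\tau)\to0$. Your route avoids the somewhat ad hoc estimate above and is cleaner, at the cost of losing the explicit decay rate; since that rate is not used elsewhere in the paper (Remark~\ref{r.624} would work equally well under your argument, as the extra forcing $1\wedge(t+K_2)^{-d/2}$ also tends to zero), nothing is lost for the present purposes.
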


\begin{proof}
To ease the notation in the proof, we simply write \eqref{e.inineq} as 
\begin{equation}\label{e.294}
\gamma(t)\leq C\beta^2 \alpha(t)+C\beta^2 \int_{-K_2}^t k(t-s)\gamma(s)\dd s,
\end{equation}
where we omitted the dependence on $K_1,K_2$ and denote $k(s)=1\wedge s^{-d/2}$. 

Before going to the iteration, we claim the following inequality holds:
\begin{equation}\label{e.293}
\int_{-K_2}^t k(t-s)[1\wedge (s+K_2)^{-(\frac{d}{2}-1)}]\dd s\les 1\wedge (t+K_2)^{-(\frac{d}{2}-1)}, \quad \mbox{ for } t\geq -K_2.
\end{equation}
By a change of variable, we have 
\[
\begin{aligned}
&\int_{-K_2}^t k(t-s)[1\wedge  (s+K_2)^{-(\frac{d}{2}-1)}]\dd s\\
&=\int_0^{t+K_2}[1\wedge (t+K_2-s)^{-\frac{d}{2}}][1\wedge s^{-(\frac{d}{2}-1)}]\dd s.
\end{aligned}
\]
First note that the integral is bounded uniformly in $t+K_2$, then we decompose the integration domain: 
\[
\begin{aligned}
&\int_0^{\frac{t+K_2}{2}}[1\wedge (t+K_2-s)^{-\frac{d}{2}}][1\wedge s^{-(\frac{d}{2}-1)}]\dd s\\
&\les \int_0^{\frac{t+K_2}{2}}[1\wedge (t+K_2-s)^{-\frac{d}{2}}]\dd s \les (t+K_2)^{-(\frac{d}{2}-1)},
\end{aligned}
\]
and
\[
\begin{aligned}
&\int_{\frac{t+K_2}{2}}^{t+K_2}[1\wedge (t+K_2-s)^{-\frac{d}{2}}][1\wedge s^{-(\frac{d}{2}-1)}]\dd s\\
&\les (t+K_2)^{-(\frac{d}{2}-1)}\int_{\frac{t+K_2}{2}}^{t+K_2}[1\wedge (t+K_2-s)^{-\frac{d}{2}}]\dd s \les (t+K_2)^{-(\frac{d}{2}-1)},
\end{aligned}
\]
which proves \eqref{e.293}.

Now we iterate \eqref{e.294} to obtain $\gamma(t) \leq \sum_{n=0}^\infty \gamma_n(t)$ with 
\[
\begin{aligned}
&\gamma_0(t)=C\beta^2\alpha(t)\\
&\gamma_n(t)=(C\beta^2)^{n+1}\int_{-K_2<s_n<\ldots<s_1<t} \prod_{j=0}^{n-1} k(s_j-s_{j+1}) \alpha (s_n) \dd s_n\ldots \dd s_1
\end{aligned}
\]
where we used the convention $s_0=t$. From the explicit expression of $\alpha$ in \eqref{e.defalpha}, we know that there exists $C>0$ such that 
\[
\alpha(s)\leq C [1\wedge (s+K_2)^{-(\frac{d}{2}-1)}], \quad \mbox{ for } s>-K_2.
\]
Now we apply \eqref{e.293} to derive that (with a possibly different constant $C>0$)
\[
\gamma_n(t) \leq (C\beta^2)^{n+1} [1\wedge (t+K_2)^{-(\frac{d}{2}-1)}].
\]
Choose $C\beta^2<1$ and sum over $n$, we know that 
\[
\gamma(t) \les 1\wedge (t+K_2)^{-(\frac{d}{2}-1)}\to0 
\]
as $K_2\to\infty$. The proof is complete.
\end{proof}

\begin{remark}\label{r.624}
It is clear from the proof that the assumption of the constant initial data $u(0,x)\equiv u_{K}(-K,x)\equiv1$ can be relaxed. A similar proof should work for more general initial conditions. One particular example for which our proof works is the following. Let $u(0,x)$ be a perturbation of the constant $\lambda>0$ in the sense that $u(0,x)=\lambda+f(x)$ with  $f\in L^1(\R^d)\cap L^\infty(\R^d)$, then \eqref{mild u_k} becomes 
\[
\begin{aligned}
u_{K}(t,x)=&\lambda+\int_{\R^d} p(t+K,x-y)f(y)\dd y\\
&+\beta\int^t_{-K}\int_{\R^d}p(t-s,x-y)\sigma(u_K(s,y))\dd W_\phi(s,y), \quad t>-K, x\in\R^d,
\end{aligned}
\]
with the second term on the r.h.s., which is associated with the initial condition, depending on $K$ as well. By following the same proof as before and the elementary fact that 
\[
\int_{\R^d} p(t+K,x-y)f(y)dy \les  1\wedge (t+K)^{-\frac{d}{2}},
\]
we derive an  integral inequality that is similar to \eqref{e.inineq} (recall that $K_1>K_2\geq 0$ and $t\geq -K_2$)
\[
\begin{aligned}
\tilde{\gamma}_{K_1,K_2}(t)\leq  C[ 1\wedge (t+K_2)^{-\frac{d}{2}}]+C\beta^2\alpha_{K_1,K_2}(t)+C\beta^2\int_{-K_2}^t [1\wedge (t-s)^{-\frac{d}{2}}] \tilde{\gamma}_{K_1,K_2}(s)\dd s\\
\end{aligned}
\]
with $\tilde{\gamma}_{K_1,K_2}(t):=\sup_{x\in\R^d} \E[|u_{K_1}(t,x)-u_{K_2}(t,x)|^2]$. Using the fact that 
\[
1\wedge (t+K_2)^{-\frac{d}{2}} \leq 1\wedge (t+K_2)^{-(\frac{d}{2}-1)},
\]
and applying Lemma~\ref{l.624} again, we conclude the proof.
\end{remark}

\begin{proof}[Proof of Proposition~\ref{p.tight}]
The tightness of $\{u(t,\cdot)\}_{t\geq0}$ in $C(\R^d)$ follows from 

(i) $\{u(t,0)\}_{t\geq0}$ is tight in $\R$; 

(ii) For any $\delta\in(0,1)$ and $n\in \Z_{\geq0}$, there exists a constant $C>0$ such that for $x_1,x_2\in \R^d$ satisfying $|x_1-x_2|\leq 1$ and any $t>0$, 
\begin{equation}\label{e.komo}
\E[|u(t,x_1)-u(t,x_2)|^{2n}]\leq C |x_1-x_2|^{2\delta n}.
\end{equation}

The tightness of $\{u(t,0)\}_{t\geq0}$ comes from the bound $\sup_{t\geq0}\|u(t,0)\|_p\leq C$ given by Lemma~\ref{l.mmbd}. To prove \eqref{e.komo}, we write 
\[
u(t,x_1)-u(t,x_2)=\beta\int_0^t \int_{\R^d} G_{x_1,x_2}(t-s,y)\sigma(u(s,y))\dd W_\phi(s,y).
\]
with
\[
G_{x_1,x_2}(t-s,y)=p(t-s,x_1-y)-p(t-s,x_2-y).
\]
 Follow the same argument in Lemma~\ref{l.mmbd}, we have 
\[
\begin{aligned}
&\E[|u(t,x_1)-u(t,x_2)|^{2n}] \\
&\les \E\left[\left(\int_0^t\int_{\R^{2d}}G_{x_1,x_2}(t-s,y_1)G_{x_1,x_2}(t-s,y_2) \sigma(u(s,y_1))\sigma(u(s,y_2)) 
R(y_1-y_2)\dd y_1\dd y_2 \dd s\right)^n\right]\\
&\les \left(\int_0^t\int_{\R^{2d}}G_{x_1,x_2}(t-s,y_1)G_{x_1,x_2}(t-s,y_2) \|\sigma(u(s,y_1))\sigma(u(s,y_2))\|_n
R(y_1-y_2)\dd y_1\dd y_2 \dd s\right)^n\\
& \les\left(\int_0^t\int_{\R^{2d}}G_{x_1,x_2}(t-s,y_1)G_{x_1,x_2}(t-s,y_2) 
R(y_1-y_2)\dd y_1\dd y_2 \dd s\right)^n.
\end{aligned}
\]
By \cite[Lemma 3.1]{Chen_2019}, for any $\delta \in (0,1)$, there exists a constant $C>0$ such that 
\[
G_{x_1,x_2}(t-s,y) \leq C (t-s)^{-\frac{\delta}{2}}[p(2(t-s),x_1-y)+p(2(t-s),x_2-y)]|x_1-x_2|^{\delta}.
\]
Thus we can bound the integral as 
\[
\begin{aligned}
&\int_0^t\int_{\R^{2d}}G_{x_1,x_2}(t-s,y_1)G_{x_1,x_2}(t-s,y_2) 
R(y_1-y_2)\dd y_1\dd y_2 \dd s\\
&\les  |x_1-x_2|^{2\delta}\sum_{i,j=1,2}\int_0^t \int_{\R^{2d}} (t-s)^{-\delta}p(2(t-s),x_i-y_1)p(2(t-s),x_j-y_2) R(y_1-y_2)\dd y_1\dd y_2 \dd s.
\end{aligned}
\]
For any $i,j$, we write the integral in Fourier domain to derive
\[
\begin{aligned}
&\int_{\R^{2d}} p(2(t-s),x_i-y_1)p(2(t-s),x_j-y_2) R(y_1-y_2) \dd y_1 \dd y_2\\
&= (2\pi)^{-d}\int_{\R^d} e^{-4|\xi|^2(t-s)} \hat{R}(\xi) e^{i\xi\cdot(x_i-x_j)}\dd \xi\les  \int_{\R^d} e^{-4|\xi|^2(t-s)} \hat{R}(\xi) \dd \xi.
\end{aligned}
\]
Another integration in $s$ leads to
\[
\int_0^t\int_{\R^d} (t-s)^{-\delta} e^{-4|\xi|^2(t-s)} \hat{R}(\xi) \dd \xi \dd s \leq \int_0^\infty s^{-\delta} e^{-4s} \dd s  \int_{\R^d} |\xi|^{2\delta-2} \hat{R}(\xi) \dd \xi<\infty,
\]
which implies 
\[
\E[|u(t,x_1)-u(t,x_2)|^{2n}]  \leq C|x_1-x_2|^{2\delta n}
\]
and completes the proof.
\end{proof}

\section{Proof of Theorem~\ref{t.thm2}}
\label{s.proof2}
Recall that the goal is to show that for $g\in C_c^\infty(\R^d)$ and $t>0$,
\[\frac{1}{\varepsilon^{\frac{d}{2}-1}}\int_{\R^d}(u_\varepsilon(t,x)-1)g(x)\dd x\Rightarrow \int_{\R^d}\mathcal{U}(t,x)g(x)\dd x,\]
where $\mathcal{U}$ solves 
\begin{equation}\label{e.limiteq}
\partial_t\mathcal{U}=\Delta\mathcal{U}+\beta \nu_{\sigma}\dot{W}(t,x).
\end{equation}

Before going to the proof, we first give some heuristics which leads to the above equation and the expression of the effective variance 
\begin{equation}\label{e.effvar1}
\nu_{\sigma}^2=\int_{\R^d}\E[\sigma(Z(0))\sigma(Z(x))]R(x)\dd x.
\end{equation}
By the equation satisfied by $u$, we know that the diffusively rescaled solution satisfies
\[
\partial_t u_\eps=\Delta u_\eps+\beta \eps^{-2}\sigma(u_\eps) \dot{W}_\phi(\tfrac{t}{\eps^2},\tfrac{x}{\eps}).
\]
Since $\dot{W}_\phi(t,x)=\int_{\R^d}\phi(x-y)\dot{W}(t,y)\dd y$, using the scaling property of $\dot{W}$, we have
\[
\eps^{-2}\dot{W}_\phi(\tfrac{t}{\eps^2},\tfrac{x}{\eps})\stackrel{\text{law}}{=} \eps^{\frac{d}{2}-1}\dot{W}_{\phi_\eps}(t,x)
\]
as random processes, with $\phi_\eps(t,x)=\eps^{-2-d}\phi(t/\eps^2,x/\eps)$ and 
\[
\dot{W}_{\phi_\eps}(t,x)=\int_{\R^d}\phi_\eps(x-y)\dot{W}(t,y)\dd y.
\]
Then the rescaled fluctuation has the same law as the solution to 
\begin{equation}\label{e.limiteq1}
\partial_t \left(\frac{u_\eps-1}{\eps^{\frac{d}{2}-1}}\right)=\Delta \left(\frac{u_\eps-1}{\eps^{\frac{d}{2}-1}}\right)+\beta \sigma(u_\eps(t,x))\dot{W}_{\phi_\eps}(t,x).
\end{equation}
By Theorem~\ref{t.thm1}, for fixed $t>0$, $\sigma(u_\eps(t,x))=\sigma(u(\tfrac{t}{\eps^2},\tfrac{x}{\eps}))$ has the same local statistical behavior as $\sigma(Z(\tfrac{x}{\eps}))$ when $\eps\ll1$. Since the product between $\sigma(u_\eps)$ and $\dot{W}_{\phi_\eps}$ is  in the It\^o's sense, roughly speaking, these two terms are independent. The fact that $u(t,\cdot)\approx Z(\cdot)$ in law for microscopically large $t$ induces a ``renewal'' mechanism which leads to a $\delta$-correlation in time of the driving force $\sigma(u_\eps(t,x))\dot{W}_{\phi_\eps}(t,x)$ after passing to the limit. While the spatial covariance function of $\dot{W}_{\phi_\eps}$ is $\eps^{-d}R(\tfrac{\cdot}{\eps})$, the overall spatial covariance function is 
\[
\begin{aligned}
&\E[\sigma(u_\eps(t,x))\dot{W}_{\phi_\eps}(t,x)\sigma(u_\eps(t,y))\dot{W}_{\phi_\eps}(t,y)]\\
\approx&\E[\sigma(Z(\tfrac{x}{\eps}))\dot{W}_{\phi_\eps}(t,x)\sigma(Z(\tfrac{y}{\eps}))\dot{W}_{\phi_\eps}(t,y)]\\
=&\E[\sigma(Z(\tfrac{x}{\eps}))\sigma(Z(\tfrac{y}{\eps}))]\E[\dot{W}_{\phi_\eps}(t,x)\dot{W}_{\phi_\eps}(t,y)]\\
=&\E[\sigma(Z(0))\sigma(Z(\tfrac{x-y}{\eps}))]\cdot\eps^{-d} R(\tfrac{x-y}{\eps}).
\end{aligned}
\]
After integrating the variable ``$x-y$'' out, we derive the effective variance  in \eqref{e.effvar1}.


For $t>0$ fixed, by the mild solution formulation
\[u_\varepsilon(t,x)=u\left(\frac{t}{\eps^2},\frac{x}{\eps}\right)=1+\beta\int^{\frac{t}{\varepsilon^2}}_0\int_{\R^d}p\left(\frac{t}{\varepsilon^2}-s,\frac{x}{\varepsilon}-y\right)\sigma(u(s,y))\dd W_\phi(s,y),\]
we may write
\begin{equation}\label{e.defXeps}
\begin{aligned}
    X_\eps=&\frac{1}{\varepsilon^{\frac{d}{2}-1}}\int_{\R^d}(u_\varepsilon(t,x)-1)g(x)\dd x\\
    =&\frac{\beta}{\varepsilon^{\frac{d}{2}-1}}\int_{\R^d}\left(\int^{\frac{t}{\varepsilon^2}}_0\int_{\R^d}p\left(\frac{t}{\varepsilon^2}-s,\frac{x}{\varepsilon}-y\right)\sigma(u(s,y))\dd W_\phi(s,y)\right)g(x)\dd x\\
    =&\delta(v_\eps), 
\end{aligned}
\end{equation}
where we recall that $\delta(\cdot)$ is the divergence operator defined in \eqref{e.defdivergence} and 
\[
    v_\eps(s,y)=\frac{\beta}{\varepsilon^{\frac{d}{2}-1}}\mathds{1}_{[0,\frac{t}{\varepsilon^2}]}(s)\sigma(u(s,y))\int_{\R^d}p\left(\frac{t}{\varepsilon^2}-s,\frac{x}{\varepsilon}-y\right)g(x)\dd x.
    \label{v_i}
\]
As $\U$ solves the equation $\partial_t \U=\Delta \U+\beta \nu_{\sigma} \dot{W}(t,x)$, we have $
\int_{\R^d} \U(t,x)g(x)\dd x 
$ is of Gaussian distribution with zero mean and variance 
\[
\Sigma_g:=\Var\left[\int_{\R^d} \U(t,x)g(x)\dd x \right]=\beta^2\nu_{\sigma}^2\int_0^t \int_{\R^{2d}} p(2(t-s),x_1-x_2) g(x_1)g(x_2) \dd x_1\dd x_2.
\]
Thus, the proof of Theorem~\ref{t.thm2} reduces to showing that
\[
X_\eps=\delta(v_\eps)\Rightarrow N(0,\Sigma_g), \quad \mbox{ as } \eps\to0.
\]
By  Proposition~\ref{p.distribution}, we only need to show 
\begin{equation}\label{e.301}
\E[ |\Sigma_g - \la DX_\eps,v_\eps\ra_{\mathcal{H}}|^2]\to0,\quad \mbox{ as } \eps\to0.
\end{equation}

From \eqref{e.defXeps}, the Malliavin derivative of $X_\eps$ satisfies
\[
\begin{aligned}
    D_{s,y}X_\eps=&v_\eps(s,y)+\frac{\beta}{\varepsilon^{\frac{d}{2}-1}}\int_{\R^d}\left(\int^{\frac{t}{\varepsilon^2}}_s\int_{\R^d}p\left(\frac{t}{\varepsilon^2}-r,\frac{x}{\varepsilon}-z\right)D_{s,y}\sigma(u(r,z))\dd W_\phi(r,z)\right)g(x)\dd x
    \notag\\
    =&v_\eps(s,y)+\frac{\beta}{\varepsilon^{\frac{d}{2}-1}}\int_{\R^d}\left(\int^{\frac{t}{\varepsilon^2}}_s\int_{\R^d}p\left(\frac{t}{\varepsilon^2}-r,\frac{x}{\varepsilon}-z\right)\Sigma(r,z)D_{s,y}u(r,z)\dd W_\phi(r,z)\right)g(x)\dd x,
\end{aligned}
\]
and $\Sigma(r,z)$ as a random variable is bounded by the Lipschitz constant $\sigL$. Recall that 
\[
\langle h,g\rangle_{\mathcal{H}}=\int_{\R^{1+2d}} h(s,x)g(s,y)R(x-y)\dd x\dd y \dd s
\] for all $h,g\in\mathcal{H}$, we have
\[
\langle DX_\eps,v_\eps\rangle_{\mathcal{H}}=\frac{\beta^2}{\varepsilon^{d-2}}(A_{1,\eps}+A_{2,\eps}),
\]
where
\begin{equation}\label{e.defA1}
\begin{aligned}
   A_{1,\eps}=&\int^{\frac{t}{\varepsilon^2}}_0\int_{\R^{2d}}\left(\int_{\R^d}p\left(\frac{t}{\varepsilon^2}-s,\frac{x_1}{\varepsilon}-y_1\right)g(x_1)\dd x_1\right)\\
    &\cdot \left(\int_{\R^d}p\left(\frac{t}{\varepsilon^2}-s,\frac{x_2}{\varepsilon}-y_2\right)g(x_2)\dd x_2\right)\sigma(u(s,y_1))\sigma(u(s,y_2))R(y_1-y_2)\dd y_1\dd y_2\dd s,
\end{aligned}
\end{equation}
and
\begin{equation}\label{e.defA2}
\begin{aligned}
    A_{2,\eps}=&\int^\frac{t}{\varepsilon^2}_0\int_{\R^{2d}}\left(\int^{\frac{t}{\varepsilon^2}}_s\int_{\R^{d}}\left(\int_{\R^d}p\left(\frac{t}{\varepsilon^2}-r,\frac{x_1}{\varepsilon}-z\right)g(x_1)\dd x_1\right)\Sigma(r,z)D_{s,y_1}u(r,z)\dd W_\phi(r,z)\right)\\
    &\cdot \left(\int_{\R^d}p\left(\frac{t}{\varepsilon^2}-s,\frac{x_2}{\varepsilon}-y_2\right)g(x_2)\dd x_2\right)\sigma(u(s,y_2))R(y_1-y_2)\dd y_1\dd y_2\dd s.
\end{aligned}
\end{equation}

We also notice that
\[
\begin{aligned}
\E[ |\Sigma_g - \la DX_\eps,v_\eps\ra_{\mathcal{H}}|^2]&=
\E[|\Sigma_g-\beta^2\varepsilon^{2-d}(A_{1,\eps}+A_{2,\eps})|^2]\\
&\leq 2\lVert \Sigma_g-\beta^2\varepsilon^{2-d}A_{1,\eps}\rVert_2^2+2\beta^4\varepsilon^{4-2d}\lVert A_{2,\eps}\rVert_2^2,
\end{aligned}
\]
so to complete the proof of Theorem~\ref{t.thm2}, it remains to show the right-hand side of the above inequality goes to zero as $\eps\to0$.

\begin{lemma}\label{l.A1}
As $\eps\to0$, $\lVert \Sigma_g-\beta^2\varepsilon^{2-d}A_{1,\eps}\rVert_2\to0$.
\end{lemma}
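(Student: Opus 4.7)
My plan is to use the bias-variance decomposition
\[
\|\beta^2\eps^{2-d}A_{1,\eps}-\Sigma_g\|_2^2 = |\beta^2\eps^{2-d}\E[A_{1,\eps}]-\Sigma_g|^2 + \beta^4\eps^{4-2d}\Var(A_{1,\eps}),
\]
and show that each term vanishes as $\eps\to 0$. First, I would perform the rescaling $s=\tilde s/\eps^2$, $y_1=\eta/\eps$, $y_2=y_1+w$ in \eqref{e.defA1}, combined with the heat-kernel scaling $p(\eps^{-2}\tau,\eps^{-1}x)=\eps^d p(\tau,x)$, to rewrite $\eps^{2-d}A_{1,\eps}$ as
\[
\int_0^t\int_{\R^d}\int_{\R^d}(P_{t-\tilde s}g)(\eta)(P_{t-\tilde s}g)(\eta+\eps w)F_\eps(\tilde s,\eta,w)R(w)\dd w\dd\eta\dd\tilde s,
\]
where $(P_r g)(\eta):=\int_{\R^d} p(r,x-\eta)g(x)\dd x$ and $F_\eps(\tilde s,\eta,w):=\sigma(u(\tilde s/\eps^2,\eta/\eps))\sigma(u(\tilde s/\eps^2,\eta/\eps+w))$.

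For the mean, spatial stationarity of $u(\cdot,\cdot)$ removes the $\eta$-dependence from $\E[F_\eps(\tilde s,\eta,w)]$, leaving $m_\eps(\tilde s,w):=\E[\sigma(u(\tilde s/\eps^2,0))\sigma(u(\tilde s/\eps^2,w))]$. I would use Theorem~\ref{t.thm1} (which gives $u(\tilde s/\eps^2,\cdot)\Rightarrow Z(\cdot)$ in $C(\R^d)$) together with the uniform integrability provided by Lemma~\ref{l.mmbd} and $|\sigma(x)|\les 1+|x|$ to conclude $m_\eps(\tilde s,w)\to m_0(w):=\E[\sigma(Z(0))\sigma(Z(w))]$ pointwise in $(\tilde s,w)$. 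Dominated convergence (using $|m_\eps|\les 1$, the boundedness and decay of $P_{t-\tilde s}g$, and the compact support of $R$) then yields
\[
\eps^{2-d}\E[A_{1,\eps}]\to \int_0^t\int_{\R^d}[(P_{t-\tilde s}g)(\eta)]^2\dd\eta\dd\tilde s\cdot \int_{\R^d} m_0(w)R(w)\dd w=\Sigma_g/\beta^2,
\]
where the last identity uses the Chapman-Kolmogorov identity $\int p(r,x-\eta)p(r,y-\eta)\dd\eta=p(2r,x-y)$ and the definition \eqref{v_sigma} of $\nu_\sigma^2$.

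For the variance, which is the main obstacle, expanding $\eps^{4-2d}\Var(A_{1,\eps})$ produces a sextuple integral in $(\tilde s_i,\eta_i,w_i)_{i=1,2}$ against $\cov(F_\eps^{(1)},F_\eps^{(2)})$. I would apply the Clark-Ocone formula (Proposition~\ref{p.clark-ocone}) to each $F_\eps^{(i)}$ and It\^o isometry to rewrite this covariance as the expectation of an $\mathcal H$-inner product of conditional Malliavin derivatives, then apply Cauchy-Schwarz together with the chain rule for $\sigma(u)$ and the estimate $\|D_{r,z}u(s,y)\|_p\les p(s-r,y-z)$ from Lemma~\ref{l.bdde}. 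Integrating the resulting product of heat kernels against $R(z_1-z_2)$ and rescaling $r=\tilde r/\eps^2$ should give
\[
|\cov(F_\eps^{(1)},F_\eps^{(2)})|\les \eps^{d-2}\int_0^{\tilde s_1\wedge \tilde s_2} p(\tilde s_1+\tilde s_2-2\tilde r,\eta_1-\eta_2)\dd\tilde r.
\]
The remaining $\dd\eta_1\,\dd\eta_2$ integral against $(P_{t-\tilde s_1}g)^2(\eta_1)(P_{t-\tilde s_2}g)^2(\eta_2)$ is then bounded uniformly in $\tilde s_i\leq t$ by Young's inequality (the above kernel is in $L^1$ in $\eta_1-\eta_2$ uniformly in $\tilde s_i\leq t$), so $\eps^{4-2d}\Var(A_{1,\eps})\les \eps^{d-2}\to 0$ whenever $d\geq 3$. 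The delicate point is tracking the $\eps$-scalings through the Clark-Ocone bookkeeping so as to extract exactly the $\eps^{d-2}$ factor; once that is in place, the mean and variance estimates combine to give the claim.
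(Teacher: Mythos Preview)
Your proposal is correct and follows essentially the same route as the paper: the bias--variance split, the convergence of the mean via Theorem~\ref{t.thm1} together with the uniform integrability from Lemma~\ref{l.mmbd}, and the variance estimate via Clark--Ocone plus the Malliavin derivative bound of Lemma~\ref{l.bdde}. The only cosmetic difference is that the paper packages the covariance bound as $\tilde F(x)\lesssim 1\wedge|x|^{2-d}$ before integrating, whereas you rescale the heat kernel directly to pull out the factor $\eps^{d-2}$; both lead to the same rate $\eps^{4-2d}\Var(A_{1,\eps})\lesssim\eps^{d-2}$.
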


\begin{lemma}\label{l.A2}
As $\eps\to0$, $\varepsilon^{2-d}\lVert A_{2,\eps}\rVert_2\to0$.
\end{lemma}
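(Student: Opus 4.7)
My plan is to recast $A_{2,\eps}$ as a single It\^o--Walsh stochastic integral through a stochastic Fubini exchange, and then bound its $L^2(\Omega)$ norm via It\^o's isometry together with the Malliavin estimate from Lemma~\ref{l.bdde}. Introducing the shorthand $G_\eps(r,z) := \int_{\R^d} p\bigl(\tfrac{t}{\eps^2}-r,\tfrac{x}{\eps}-z\bigr) g(x)\,\dd x$, I would swap the outer $\dd s$ integration in \eqref{e.defA2} with the inner stochastic integration over $[s, t/\eps^2]$ to obtain
\[
A_{2,\eps} = \int_0^{t/\eps^2}\int_{\R^d} \Psi_\eps(r,z)\,\dd W_\phi(r,z),
\]
where the $\mathscr{F}_r$-adapted integrand is
\[
\Psi_\eps(r,z) = G_\eps(r,z)\,\Sigma(r,z)\int_0^r\int_{\R^{2d}} R(y_1-y_2)\, G_\eps(s,y_2)\, \sigma(u(s,y_2))\, D_{s,y_1}u(r,z)\, \dd y_1 \dd y_2 \dd s.
\]
The exchange is justified by adaptedness together with $L^2$-integrability, which follows from Lemmas~\ref{l.mmbd}--\ref{l.bdde}.

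I would then apply It\^o's isometry with Cauchy--Schwarz (using $R \geq 0$) to get
\[
\|A_{2,\eps}\|_2^2 \leq \int_0^{t/\eps^2}\int_{\R^{2d}} \|\Psi_\eps(r,z_1)\|_2\, \|\Psi_\eps(r,z_2)\|_2\, R(z_1-z_2)\,\dd z_1 \dd z_2 \dd r.
\]
Using $|\Sigma| \leq \sigL$, Minkowski in the $(s,y_1,y_2)$ integral, $\|\sigma(u(s,y))\|_4 \les 1$ from Lemma~\ref{l.mmbd}, and $\|D_{s,y_1}u(r,z)\|_4 \les p(r-s, z-y_1)$ from Lemma~\ref{l.bdde}, I would obtain $\|\Psi_\eps(r,z)\|_2 \les |G_\eps(r,z)|\,\Phi_\eps(r,z)$ with
\[
\Phi_\eps(r,z) := \int_0^r\int_{\R^{2d}} R(y_1-y_2)|G_\eps(s,y_2)|\, p(r-s, z-y_1)\,\dd y_1 \dd y_2 \dd s.
\]

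What remains is deterministic bookkeeping. A direct computation gives $\|G_\eps(s,\cdot)\|_\infty \les \eps^d$ and $\|G_\eps(s,\cdot)\|_1 \les 1$ (both coming from the heat kernel integral $\int p(\tfrac{t}{\eps^2}-s,\tfrac{x}{\eps}-z)\,\dd x = \eps^d$, $\int p\,\dd z = 1$, paired with $g \in L^1\cap L^\infty$). Together with $\|R\|_1 < \infty$ and $\int p(r-s, z-y_1)\,\dd y_1 = 1$, this yields $\Phi_\eps(r,z) \les r\eps^d \les \eps^{d-2}$ uniformly in $r \in [0, t/\eps^2]$, and an analogous $L^1$--$L^\infty$ convolution estimate gives $\int R(z_1-z_2)|G_\eps(r,z_1)||G_\eps(r,z_2)|\,\dd z_1 \dd z_2 \les \eps^d$. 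Combining the contributions,
\[
\E[A_{2,\eps}^2] \les \eps^{2(d-2)}\cdot\eps^d\cdot\tfrac{t}{\eps^2} = t\,\eps^{3(d-2)},
\]
so $\eps^{2-d}\|A_{2,\eps}\|_2 \les \eps^{(d-2)/2} \to 0$ whenever $d \geq 3$.

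The main obstacle is justifying the stochastic Fubini exchange cleanly and distributing the Cauchy--Schwarz steps so that the long time horizon $t/\eps^2$ is absorbed by the pointwise bound $G_\eps = O(\eps^d)$; the fact that the final margin is exactly $\eps^{(d-2)/2}$ indicates $d \geq 3$ is the sharp regime in which this direct approach succeeds.
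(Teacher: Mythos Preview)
Your proposal is correct and yields the same final rate $\eps^{2-d}\|A_{2,\eps}\|_2 \les \eps^{(d-2)/2}$ as the paper, but the route is genuinely different. The paper does \emph{not} invoke stochastic Fubini: instead it keeps the stochastic integral $\tilde{A}_{2,\eps}(s,x_1,y_1)$ intact, applies Minkowski to pull only the outer $s$-integration outside the $L^2(\Omega)$ norm, and then computes the expectation of the product $\tilde{A}_{2,\eps}\tilde{A}_{2,\eps}'\sigma\sigma'$ via It\^o's isometry in the $r$-variable. This leads to a $6d$-dimensional spatial integral (the object $B_\eps$) which is reduced through explicit heat-kernel convolutions after a change of variables. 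Your approach front-loads the work by exchanging the $(s,y_1,y_2)$ integrals with the stochastic integral to write $A_{2,\eps}=\delta(\Psi_\eps)$; the subsequent It\^o isometry then produces an integral in $(r,z_1,z_2)$ only, and you absorb the full time horizon $t/\eps^2$ via the crude uniform bound $\Phi_\eps \les r\eps^d \leq t\eps^{d-2}$. Your argument is shorter and more elementary once the Fubini exchange is granted, while the paper's version avoids that technical step at the cost of carrying more variables through the computation; both rely on Lemmas~\ref{l.mmbd} and~\ref{l.bdde} in exactly the same way.
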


In the proofs of Lemma~\ref{l.A1} and \ref{l.A2}, we will use the notation 
\[
g_t(x)=\int_{\R^d} p(t,x-y)g(y)\dd y, \quad \quad t>0, x\in\R^d,
\]
so 
\[
|g_t(x)|\leq \|g\|_{L^\infty(\R^d)},\quad\quad \int_{\R^d}|g_t(x)|\dd x \leq \|g\|_{L^1(\R^d)},
\]
for all $t>0,x\in\R^d$. Without loss of generality, we assume the   function $g$ is non-negative when we  estimate integrals involving $g$.

\begin{proof}[Proof of Lemma~\ref{l.A1}]
We first simplify the expression of $A_{1,\eps}$ defined in \eqref{e.defA1}. After the change of variables $y_1\mapsto y_1/\eps,y_2\mapsto y_2/\eps, s\mapsto s/\eps^2$ and use the scaling property of the heat kernel, we have 
\begin{equation}\label{e.302}
\begin{aligned}
   A_{1,\eps}=&\eps^{-2}\int^{t}_0\int_{\R^{2d}}\left(\int_{\R^d}p\left(t-s,x_1-y_1\right)g(x_1)\dd x_1\right)\\
    &\cdot \left(\int_{\R^d}p\left(t-s,x_2-y_2\right)g(x_2)\dd x_2\right)\sigma\left(u\left(\frac{s}{\eps^2},\frac{y_1}{\eps}\right)\right)\sigma\left(u\left(\frac{s}{\eps^2},\frac{y_2}{\eps}\right)\right)R\left(\frac{y_1-y_2}{\eps}\right)\dd y_1\dd y_2\dd s.
\end{aligned}
\end{equation}
Further change $\tfrac{y_1-y_2}{\eps}\mapsto z$ and $y_2\mapsto y$, we obtain 
\[
\eps^{2-d} A_{1,\eps}=\int_0^t \int_{\R^{2d}} g_{t-s}(y+\eps z) g_{t-s}(y) \sigma\left(u\left(\frac{s}{\eps^2},\frac{y}{\eps}+z\right)\right)\sigma\left(u\left(\frac{s}{\eps^2},\frac{y}{\eps}\right)\right)R(z)\dd y \dd z\dd s,
\]
where we recall that $g_{t-s}(y)=\int_{\R^d} p(t-s,y-z)g(z)\dd z$. The proof is then divided into two steps:

(i) $\beta^2\eps^{2-d}\E[A_{1,\eps}]\to \Sigma_g$ as $\eps\to0$.

(ii) $\eps^{4-2d}\Var[A_{1,\eps}]\to0$ as $\eps\to0$.

To prove (i), it suffices to note that $u(s/\eps^2,x)$ is stationary in $x-$variable, so 
\[
\beta^2 \eps^{2-d}\E[A_{1,\eps}]=\beta^2\int_0^t \int_{\R^{2d}} g_{t-s}(y+\eps z) g_{t-s}(y) \E\left[\sigma\left(u\left(\frac{s}{\eps^2},z\right)\right)\sigma\left(u\left(\frac{s}{\eps^2},0\right)\right)\right]R(z)\dd y \dd z\dd s.
\]
By Theorem~\ref{t.thm1}, we know that for $s>0, z\in\R^d$, the random vector 
\[
(u(s/\eps^2,z),u(s/\eps^2,0))\Rightarrow (Z(z),Z(0))
\]
in distribution as $\eps\to0$. By the fact that $\sigma$ is Lipschitz and applying Lemma~\ref{l.mmbd}, we have the uniform integrability to pass to the limit and conclude that 
\[
\beta^2\eps^{2-d} \E[A_{1,\eps}]\to \beta^2\int_0^t \int_{\R^{2d}} |g_{t-s}(y)|^2\E[\sigma(Z(z))\sigma(Z(0))]R(z)\dd y \dd z\dd s=\Sigma_g.
\]

To prove (ii), we first use \eqref{e.302} to write
\begin{equation}\label{e.varA1}
\begin{aligned}
\eps^{4-2d}\Var[A_{1,\eps}]=&\varepsilon^{-2d}\int^{t}_0\int_{\R^{4d}}g_{t-s}(y_1)g_{t-s}(y_2)g_{t-s}(y_1')g_{t-s}(y_2')\\
    &\cdot\cov[\Lambda_\varepsilon(s,y_1,y_2),\Lambda_\varepsilon(s,y_1',y_2')]R\left(\frac{y_1-y_2}{\varepsilon}\right)R\left(\frac{y_1'-y_2'}{\varepsilon}\right)\dd y_1\dd y_2\dd y_1'\dd y_2'\dd s,
\end{aligned}
\end{equation}
where 
\[\Lambda_\varepsilon(s,y_1,y_2)=\sigma\left(u\left(\frac{s}{\varepsilon^2},\frac{y_1}{\varepsilon}\right)\right)\sigma\left(u\left(\frac{s}{\varepsilon^2},\frac{y_2}{\varepsilon}\right)\right).\]

Applying the Clark-Ocone formula (Proposition \ref{p.clark-ocone}) to $\Lambda_\varepsilon$, we obtain that
\begin{align*}
    \Lambda_\varepsilon(s,y_1,y_2)=\E[\Lambda_\varepsilon(s,y_1,y_2)]+\int^{\frac{s}{\varepsilon^2}}_0\int_{\R^d}\E[D_{r,z}\Lambda_\varepsilon(s,y_1,y_2)\vert\mathscr{F}_r]\dd W_\phi(r,z),
\end{align*}
from which we deduce that
\begin{align*}
    \cov[\Lambda_\varepsilon(s,y_1,y_2),\Lambda_\varepsilon(s,y_1',y_2')]=&\int^{\frac{s}{\varepsilon^2}}_0\int_{\R^{2d}}\E\left[\E[D_{r,z_1}\Lambda_\varepsilon(s,y_1,y_2)\vert \mathscr{F}_r]\E[D_{r,z_2}\Lambda_\varepsilon(s,y_1',y_2')\vert \mathscr{F}_r]\right]\\
    &\cdot R(z_1-z_2)\dd z_1\dd z_2\dd r.\vphantom{\int^1_0}
\end{align*}
By the Chain Rule, we have
\begin{align*}
    D_{r,z}\Lambda_\varepsilon(s,y_1,y_2)=&\Sigma\left(\frac{s}{\eps^2},\frac{y_1}{\eps}\right)D_{r,z}u\left(\frac{s}{\varepsilon^2},\frac{y_1}{\varepsilon}\right)\sigma\left(u\left(\frac{s}{\varepsilon^2},\frac{y_2}{\varepsilon}\right)\right)\\
    &+\Sigma\left(\frac{s}{\eps^2},\frac{y_2}{\eps}\right)D_{r,z}u\left(\frac{s}{\varepsilon^2},\frac{y_2}{\varepsilon}\right)\sigma\left(u\left(\frac{s}{\varepsilon^2},\frac{y_1}{\varepsilon}\right)\right).
\end{align*}
Applying Lemma~\ref{l.mmbd}, \ref{l.bdde}, and using the fact that $\Sigma$ is uniformly bounded, we derive that 
\[
\begin{aligned}
   \lVert \E[D_{r,z}\Lambda_\varepsilon(s,y_1,y_2)\vert \mathscr{F}_r]\rVert_2\leq& \|D_{r,z}\Lambda_\varepsilon(s,y_1,y_2)\|_2 \\
   \les &p\left(\frac{s}{\varepsilon^2}-r,\frac{y_1}{\varepsilon}-z_1\right)+p\left(\frac{s}{\varepsilon^2}-r,\frac{y_2}{\varepsilon}-z_1\right).
\end{aligned}
\]

Therefore, we deduce that
\begin{align*}
    &\left\vert\cov[\Lambda_\varepsilon(s,y_1,y_2),\Lambda_\varepsilon(s,y_1',y_2')]\vphantom{\int^1_0}\right\vert\\
    \les&\int^{\frac{s}{\varepsilon^2}}_0\int_{\R^{2d}}\left(p\left(\frac{s}{\varepsilon^2}-r,\frac{y_1}{\varepsilon}-z_1\right)+p\left(\frac{s}{\varepsilon^2}-r,\frac{y_2}{\varepsilon}-z_1\right)\right)\\
    &\cdot \left(p\left(\frac{s}{\varepsilon^2}-r,\frac{y_1'}{\varepsilon}-z_2\right)+p\left(\frac{s}{\varepsilon^2}-r,\frac{y_2'}{\varepsilon}-z_2\right)\right)R(z_1-z_2)\dd z_1\dd z_2\dd r\\
    = & \sum_{i,j=1,2} F\left(\frac{s}{\eps^2},\frac{y_i-y_j'}{\eps}\right),
\end{align*}
where 
\[
\begin{aligned}
F\left(\frac{s}{\eps^2},\frac{y_i-y_j'}{\eps}\right)=&\int_0^{\frac{s}{\eps^2}} \int_{\R^{2d}} p\left(r,\frac{y_i-y_j'}{\varepsilon}-z_1\right)p\left(r,z_2\right)R(z_1-z_2)\dd z_1\dd z_2\dd r\\
\leq&
\tilde{F}\left(\frac{y_i-y_j'}{\eps}\right),
\end{aligned}
\]
with 
\[
\tilde{F}(x):=\int_0^\infty \int_{\R^{2d}} p(r,x-z_1)p(r,z_2)R(z_1-z_2)\dd z_1\dd z_2\dd r \les 1\wedge |x|^{2-d}.
\]
Going back to \eqref{e.varA1}, it suffices to estimate the integral 
\[
\begin{aligned}
\eps^{-2d}\int^{t}_0\int_{\R^{4d}}&g_{t-s}(y_1)g_{t-s}(y_2)g_{t-s}(y_1')g_{t-s}(y_2')\\
&\cdot \tilde{F}\left(\frac{y_i-y_j'}{\eps}\right)R\left(\frac{y_1-y_2}{\varepsilon}\right)R\left(\frac{y_1'-y_2'}{\varepsilon}\right)\dd y_1\dd y_2\dd y_1'\dd y_2'\dd s
\end{aligned}
\]
for $i,j=1,2$ and show it vanishes as $\eps\to0$. By symmetry, we only need to consider the case $i=j=1$. After a change of variables $y_1\mapsto y_2+\eps y_1, y_1'\mapsto y_2'+\eps y_1'$ and use the fact that $|g_{t-s}(\cdot)|\leq \|g\|_\infty$, the above expression is bounded by 
\[
\begin{aligned}
&\int_0^t \int_{\R^{4d}}g_{t-s}(y_2)g_{t-s}(y_2')R(y_1)R(y_1')\tilde{F}\left(\frac{y_2-y_2'}{\eps}+y_1-y_1'\right)\dd y_1\dd y_2\dd y_1'\dd y_2'\dd s\\
&\les \eps^{d-2}\int_0^t \int_{\R^{4d}}g_{t-s}(y_2)g_{t-s}(y_2')R(y_1)R(y_1')|y_2-y_2'|^{2-d}\dd y_1\dd y_2\dd y_1'\dd y_2'\dd s\les \eps^{d-2}.
\end{aligned}
\]
The proof is complete.
%
\end{proof}

\begin{proof}[Proof of Lemma~\ref{l.A2}]
Define
\begin{equation}\label{e.deftildeA}
\tilde{A}_{2,\eps}(s,x_1,y_1):=\int^{\frac{t}{\varepsilon^2}}_s\int_{\R^d}p\left(\frac{t}{\varepsilon^2}-r,\frac{x_1}{\varepsilon}-z\right)\Sigma(r,z)D_{s,y_1}u(r,z)\dd W_\phi(r,z),
\end{equation}
and we can write
\begin{align*}
    A_{2,\eps}=\int^\frac{t}{\varepsilon^2}_0\int_{\R^{4d}}&\tilde{A}_{2,\eps}(s,x_1,y_1)p\left(
\frac{t}{\varepsilon^2}-s,\frac{x_2}{\varepsilon}-y_2\right)\sigma(u(s,y_2))\\
&
\cdot g(x_1)g(x_2)R(y_1-y_2)\dd x_1\dd x_2\dd y_1\dd y_2\dd s.
\end{align*}
After the change of variable $s\mapsto s/\eps^2, y_i\mapsto y_i/\eps$ and integrating in $x_2$, we have
\[
\begin{aligned}
A_{2,\eps}=\eps^{-d-2}\int_0^t\int_{\R^{3d}}& \tilde{A}_{2,\eps}\left(\frac{s}{\eps^2},x_1,\frac{y_1}{\eps}\right) \sigma\left(u\left(\frac{s}{\eps^2},\frac{y_2}{\eps}\right)\right) g(x_1)g_{t-s}(y_2)\\
& \cdot R\left(\frac{y_1-y_2}{\eps}\right)\dd x_1\dd y_1\dd y_2\dd s.
\end{aligned}
\]
Define 
\[
\begin{aligned}
&B_\eps(s,x_1,x_1',y_1,y_1',y_2,y_2')\\
&=\E\left[\tilde{A}_{2,\eps}\left(\frac{s}{\eps^2},x_1,\frac{y_1}{\eps}\right)\tilde{A}_{2,\eps}\left(\frac{s}{\eps^2},x_1',\frac{y_1'}{\eps}\right)\sigma\left(u\left(\frac{s}{\eps^2},\frac{y_2}{\eps}\right)\right)\sigma\left(u\left(\frac{s}{\eps^2},\frac{y_2'}{\eps}\right)\right)\right],
\end{aligned}
\]
then by Minkowski inequality, we have 
\begin{equation}\label{e.311}
\begin{aligned}
\|A_{2,\eps}\|_2\leq \eps^{-d-2}\int_0^t \bigg(\int_{\R^{6d}}&B_\eps(s,x_1,x_1',y_1,y_1',y_2,y_2')g(x_1)g(x_1')g_{t-s}(y_2)g_{t-s}(y_2')\\
&\cdot R\left(\frac{y_1-y_2}{\eps}\right)R\left(\frac{y_1'-y_2'}{\eps}\right)\dd x_1\dd x_1'\dd y_1\dd y_2\dd y_1'\dd y_2'\bigg)^{\frac12}\dd s.
\end{aligned}
\end{equation}
Meanwhile, by the expression of $\tilde{A}_{2,\eps}$ in \eqref{e.deftildeA} and It\^o's isometry, we have 
\[
\begin{aligned}
B_\eps(s,x_1,x_1',&y_1,y_1',y_2,y_2')\\
=\int^{\frac{t}{\varepsilon^2}}_{\frac{s}{\varepsilon^2}}\int_{\R^{2d}}&\E\left[\Sigma(r,z_1)\Sigma(r,z_2)D_{\frac{s}{\varepsilon^2},\frac{y_1}{\varepsilon}}u(r,z_1)D_{\frac{s}{\varepsilon^2},\frac{y_1'}{\varepsilon}}u(r,z_2)\sigma\left(u\left(\frac{s}{\varepsilon^2},\frac{y_2}{\varepsilon}\right)\right)\sigma\left(u\left(\frac{s}{\varepsilon^2},\frac{y_2'}{\varepsilon}\right)\right)\right]\\
&\cdot p\left(\frac{t}{\varepsilon}-r,\frac{x_1}{\varepsilon}-z_1\right)p\left(\frac{t}{\varepsilon}-r,\frac{x_1'}{\varepsilon}-z_2\right) R(z_1-z_2)\dd z_1\dd z_2\dd r.
\end{aligned}
\]
Applying Lemma~\ref{l.mmbd}, \ref{l.bdde}, Cauchy-Schwarz inequality and a change of variables, 
\[
\begin{aligned}
|B_\eps(s,x_1,x_1',y_1,y_1',y_2,y_2')|\les \eps^{2d-2}\int_s^t\int_{\R^{2d}} &p(r-s,z_1-y_1)p(r-s,z_2-y_1')p(t-r,x_1-z_1)\\
&\cdot p(t-r,x_1'-z_2)R\left(\frac{z_1-z_2}{\eps}\right) \dd z_1 \dd z_2 \dd r.
\end{aligned}
\]
Substitute the above estimate into \eqref{e.311} and integrate in $x_1,x_1'$, we finally obtain
\[
\begin{aligned}
\|A_{2,\eps}\|_2\les\eps^{-3}\int_0^t\bigg(\int_s^t\int_{\R^{6d}}&p(r-s,z_1-y_1)p(r-s,z_2-y_1')g_{t-r}(z_1)g_{t-r}(z_2)g_{t-s}(y_2)g_{t-s}(y_2')\\
&\cdot R\left(\frac{y_1-y_2}{\eps}\right)R\left(\frac{y_1'-y_2'}{\eps}\right)R\left(\frac{z_1-z_2}{\eps}\right)\dd y_1 \dd y_2 \dd y_1' \dd y_2'\dd z_1 \dd z_2 \dd r \bigg)^{\frac12} \dd s.
\end{aligned}
\]
For the integral on the right-hand side of the above inequality, we compute the integral in $y_1,y_2$ explicitly:  
\[
\int_{\R^{2d}}p(r-s,z_1-y_1)g_{t-s}(y_2)R\left(\frac{y_1-y_2}{\eps}\right) \dd y_1 \dd y_2=\eps^d \int_{\R^d} g_{t+r-2s}(z_1-\eps y_1) R(y_1)\dd y_1\les \eps^d.
\]
Similarly, the integral in $y_1',y_2'$ is also bounded by 
\[
\int_{\R^{2d}} p(r-s,z_2-y_1')g_{t-s}(y_2')R\left(\frac{y_1'-y_2'}{\eps}\right) \dd y_1'\dd y_2'\les\eps^d.
\]
Thus, 
\[
\begin{aligned}
\|A_{2,\eps}\|_2\les\eps^{d-3}\int_0^t\bigg(\int_s^t\int_{\R^{2d}}&g_{t-r}(z_1)g_{t-r}(z_2)R\left(\frac{z_1-z_2}{\eps}\right) \dd z_1 \dd z_2 \dd r \bigg)^{\frac12} \dd s\les \eps^{\frac{3d}{2}-3}.
\end{aligned}
\]
The proof is complete.
\end{proof}


\end{document}